\newcommand{\gabsys}{\mathcal{G}(g,\Lambda)}
\newcommand{\abZd}{a\mathbb{Z}^d \times b\mathbb{Z}^d}
\newcommand{\LiRt}{\textbf{\textit{L}}^1 (\mathbb{R}^{2})}
\newcommand{\LiRd}{\textbf{\textit{L}}^1 (\mathbb{R}^{d})}
\newcommand{\LiRtd}{\textbf{\textit{L}}^1 (\mathbb{R}^{2d})}
\newcommand{\Ltsp}{\textbf{\textit{L}}^2}
\newcommand{\LtRd}{\textbf{\textit{L}}^2 (\mathbb{R}^{d})}
\newcommand{\LtRtd}{\textbf{\textit{L}}^2 (\mathbb{R}^{2d})}
\newcommand{\LtR}{\textbf{\textit{L}}^2 (\mathbb{R})}
\newcommand{\LpRtd}{\textbf{\textit{L}}^p (\mathbb{R}^{2d})}
\newcommand{\Lpsp}{\textbf{\textit{L}}^p}
\newcommand{\Rd}{{\mathbb{R}^d}}
\newcommand{\Rtd}{\mathbb{R}^{2d}}
\newcommand{\RR}{\mathbb{R}}
\newcommand{\ZZ}{\mathbb{Z}}
\newcommand{\fthat}{\hat{\rule{0mm}{2.8mm}\;}}
\newcommand{\sch}{\mathcal{S}}
\newcommand{\vareps}{\varepsilon}
\newcommand{\supp}{\text{supp\,}}
\newcommand{\sinc}{\text{sinc}}
\theoremstyle{plain}
\newtheorem{theorem}{Theorem}[section]
\newtheorem{proposition}[theorem]{Proposition}
\newtheorem{lemma}[theorem]{Lemma}
\theoremstyle{definition}
\theoremstyle{remark}
\newcommand{\tfa}{time-frequency analysis}
\newcommand{\tf}{time-frequency}
\newcommand{\fif}{if and only if}
\newcommand{\tfs}{time-frequency shift}
\newcommand{\bdl}{band-limited}
\newcommand{\psdo}{pseudodifferential operator}
\newcommand{\beqa}{\begin{eqnarray*}}
\newcommand{\eeqa}{\end{eqnarray*}}
\newcommand{\field}[1]{\mathbb{#1}}
\newcommand{\bR}{\field{R}}        
\newcommand{\bZ}{\field{Z}}        
 \def\cS{\mathcal{S}}
\def\rd{\bR^d}
\def\zd{\bZ^d}
\def\rdd{{\bR^{2d}}}
\def\<{\left<}
\def\>{\right>}
\def\mv1{M_v^1}
\renewcommand{\qed}{\hfill \rule{7pt}{8pt} \vskip .2truein}   
\begin{document}

\title{Uniqueness and Reconstruction Theorems for Pseudodifferential
  Operators with a Bandlimited Kohn-Nirenberg Symbol
      \thanks{This work was funded by the Austrian Science Fund (FWF) in project NFN SISE S106.}
}


\author{Karlheinz Gr\"{o}chenig         \and
        Elmar Pauwels 
}




\maketitle

\begin{abstract}
Motivated by the problem of channel estimation in wireless
communications, we derive a  reconstruction formula for \psdo s with
a bandlimited  symbol. This reconstruction formula  
uses the diagonal entries of the matrix of the pseudodifferential
operator with respect to a Gabor system.  
In addition, we prove several other uniqueness theorems that shed
light on the relation between a \psdo\ and its matrix with respect to
a  Gabor system. 

\end{abstract}

\section{Introduction}
\label{sec:intro}

The mathematical formulation of orthogonal frequency
division multiplexing (OFDM) in wireless communications uses several
fundamental notions from  \tfa . On the one hand, Gabor expansions
are used to transform digital information into an analog signal. On the 
other hand, \psdo s are used to model the  distortion of a signal by 
the physical channel. Inevitably the rigorous analysis
of the communication system leads to new and interesting questions in
\tfa\ that are quite relevant for communication engineering. 

In this paper we study a  problem arising in  channel estimation.
Which  information is required to determine the 
symbol of a \psdo ? How can an operator be reconstructed from such
information?  

To put the discussion on a firm basis, let  us describe an extremely
simplified model of signals and the transmission in wireless
communications. See~\cite{Gr10,str06} for the mathematical models.  Let   $\pi (z)f(t) = e^{2\pi i
  z_2 t} f(t-z_1)$ denote the \tfs\ by $z=(z_1,z_2) \in \bR ^2$, and
  let $\Lambda = a\bZ \times b \bZ $ denote a lattice in the \tf\
  plane with lattice parameters $a,b>0$.  In
  orthogonal frequency division multiplexing (OFDM)  a string of numbers $c_\lambda $, the
  ``digital'' information, is used as the coefficient sequence for a Gabor
  series  of the form 
$$
f = \sum _{k\in \bZ  } \sum _{|l| \leq B} c_{kl} e^{2\pi i b l t}
  g(t-ak) = \sum _{\lambda \in \Lambda } c_\lambda \pi (\lambda )g \, , t\in \bR \, . 
$$

The pulse $g$ is
usually taken to be a characteristic function, but in nonstationary
environments pulses with better frequency concentration are
preferable \cite{boduhl99, grhahlmasvXX, hlmasc02, komo98, sc06}.
The analog  signal thus built is then transmitted from a sender to a
receiver and  distorted or transformed by physical processes. 

The second link between wireless communications and  \tfa\ is the description of the distortion of
the signal $f$ during the physical transmission. As a result of multipath
propagation  and of  the Doppler effect,  the received signal is a
superposition of time-frequency shifts. Specifically, the received 
signal can be  written as 
$$
\tilde f (t) = \iint \hat{\sigma } (\eta ,u ) \pi (-u,\eta) f(t)
\, dud\eta  \, .
$$
Here $\hat{\sigma } $ is the Fourier transform of a symbol $\sigma $
on $\mathbb{R}^2$ 
and is  called the \textit{spreading function} that indicates
the amplitude of each occuring \tfs . 
In the standard mathematical language the  distortion $f \to  \tilde
f$  is just the  \psdo\ (in the
Kohn-Nirenberg calculus) with symbol $\sigma $, and is usually written
as 
$$
\tilde f (t) = \sigma ^{KN} f(t) = \int \sigma (x,\xi ) e^{2\pi i x\xi
} \hat{f}(\xi )\, d\xi \, .
$$
For physical reasons the time delay and the Doppler shift must be
bounded, and therefore the spreading function $\hat{\sigma }$ has a
compact support. Equivalently, the symbol $\sigma $ is bandlimited,
i.e., analytic and of
exponential type. From the
perspective  of analysis,  such \psdo s are extremely special and are
only   the raw material for the study of difficult
operators~\cite{HorIII85}.  For wireless communications, \psdo s with
bandlimited  symbols are precisely the appropriate model.

At the receiver, the distorted analog signal $\tilde f$ is analyzed by
taking correlations with \tfs s of the given pulse (or some other
pulse). Thus the data to be analyzed are therefore the numbers 
\begin{equation}
  \label{eq:c4}
y_\lambda = \langle \tilde f , \pi (\lambda ) g\rangle = \sum _{\mu
  \in \Lambda }  c_\mu
\langle \sigma ^{KN} \pi (\mu )g, \pi (\lambda )g\rangle  \quad
\lambda \in \Lambda \, .
\end{equation}
The task of the engineer is now to recover and estimate the original
data $c_\lambda $ from the received information $y_\lambda $. 
The central object here is the matrix $H$ with entries  
\begin{equation}
  \label{eq:c5}
H_{\lambda \mu } =   \big(\langle \sigma ^{KN} \pi (\mu )g, \pi (\lambda )g\rangle
\big)_{\lambda , \mu \in \Lambda }
\end{equation}
 of the \psdo\ with respect to the
set of \tfs s $\{ \pi (\lambda ) g : \lambda \in \Lambda \}$. In
wireless communications this matrix is called the \emph{channel
matrix}. Its estimation and inversion are among  the principal
engineering tasks. 

A fundamental mathematical problem concerns the relation between the
channel matrix and the  symbol. This range of questions  has been
studied  in \tfa , e.g., in~\cite{gro06,Gr10}, yet the
models and assumptions of wireless communications pose new and
intriguing problems. An important
objective is to recover or approximate the symbol $\sigma $  from partial
information about the channel matrix; this is the problem of
\emph{channel estimation}. Usually, in  real  wireless communication
systems, pilot tones are used to estimate some entries of the channel
matrix on the diagonal   \cite{coerpuba02, best03}. The problem then is to recover the entire
matrix \eqref{eq:c5} and subsequently to solve the system $y=Hc$.

 The engineering models lead to the
mathematical question when and how the channel matrix is completely
determined by its diagonal. For arbitrary operators, this question
does not even  make
sense, but for operators with an  analytic symbol, as we will see,  one can recover the symbol completely
from the diagonal of the channel matrix. Our
first contribution is a precise reconstruction formula for the symbol
from the diagonal of the channel matrix. In other words, the matrix is
completely determined by its diagonal! The hypothesis that $\sigma $
is \bdl\ suggests a connection to the sampling theory of \bdl\
functions. Indeed, once this connection is established (which we 
do in Lemma \ref{lem:diagelem} below), one may apply results from the Shannon sampling
theory and obtains the following reconstruction formula. 

\begin{theorem}\label{firsta}
Let $\Lambda = a \zd \times b \zd$, $g\in \cS (\rd )$. If $\sigma \in
\cS ' 
(\rdd  )$, $\supp \hat{\sigma } \subseteq Q_\vareps = [-\frac{1}{2a} +
\vareps,\frac{1}{2a} - \vareps]^d \times [-\frac{1}{2b} +
\vareps,\frac{1}{2b} - \vareps]^d$ for some $\epsilon >0$, then there exists a kernel $K\in
\cS (\rdd )$ such that  
\begin{equation}
  \label{eq:c1}
\sigma = \sum _{\lambda \in \Lambda } \langle \sigma ^{KN} \pi
(\lambda )g, \pi (\lambda ) g\rangle T_\lambda K
  \end{equation}
with convergence in $\cS ' $.   
\end{theorem}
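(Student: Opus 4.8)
The plan is to reduce the reconstruction of $\sigma$ to the classical Shannon sampling theorem, as the remark preceding the statement suggests, and then to invert an auxiliary convolution on the Fourier side. First I would make explicit the link between the diagonal of the channel matrix and sampling. The Kohn--Nirenberg calculus intertwines with the Rihaczek distribution $R(h,f)(x,\xi)=h(x)\overline{\wh f(\xi)}\,e^{-2\pi i x\xi}$ via $\langle\sigma^{KN}f,h\rangle=\langle\sigma,R(h,f)\rangle$, and $R$ is covariant under time-frequency shifts, $R(\pi(\lambda)g,\pi(\lambda)g)=T_\lambda R(g,g)$. Hence
\[
\langle\sigma^{KN}\pi(\lambda)g,\pi(\lambda)g\rangle=\langle\sigma,T_\lambda R(g,g)\rangle=(\sigma*\Phi)(\lambda),\qquad \Phi(w):=\overline{R(g,g)(-w)}\in\cS(\rdd),
\]
so the diagonal entries are precisely the samples $F(\lambda)$, $\lambda\in\Lambda$, of the function $F:=\sigma*\Phi$. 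Since $\wh F=\wh\sigma\,\wh\Phi$, the spectrum of $F$ lies in $\supp\wh\sigma\subseteq Q_\vareps$, which is contained, with margin $\vareps$, in the fundamental domain $[-\tfrac1{2a},\tfrac1{2a}]^d\times[-\tfrac1{2b},\tfrac1{2b}]^d$ of the dual lattice $\Lambda^\perp=\tfrac1a\zd\times\tfrac1b\zd$.

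Because of this strict inclusion we are in the oversampled regime. I would fix $\phi\in\cS(\rdd)$ with $\wh\phi\equiv(ab)^d$ on $Q_\vareps$ and $\supp\wh\phi$ inside a slightly larger box still contained in the fundamental domain. Then $F=\sum_{\lambda\in\Lambda}F(\lambda)\,T_\lambda\phi$: on the Fourier side the translates $\{\wh F(\cdot-\mu):\mu\in\Lambda^\perp\}$ have pairwise disjoint supports, so Poisson summation gives $\sum_{\lambda}F(\lambda)e^{-2\pi i\lambda\omega}=(ab)^{-d}\wh F(\omega)$ on $Q_\vareps$, and multiplication by $\wh\phi$ returns $\wh F$.

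To pass from $F$ back to $\sigma$ I would build the deconvolution into the kernel. Define $\wh K:=\wh\phi/\wh\Phi$, smoothly truncated to the slightly larger box; since $\wh\Phi$ is smooth and (see below) bounded away from zero there, $\wh K$ is a smooth compactly supported function and $K\in\cS(\rdd)$. Repeating the Poisson computation with $K$ in place of $\phi$ yields, on $Q_\vareps$,
\[
\Big(\sum_{\lambda\in\Lambda}F(\lambda)\,T_\lambda K\Big)^{\!\wedge}=\wh K\cdot(ab)^{-d}\wh F=\frac{\wh\phi}{\wh\Phi}\,(ab)^{-d}\,\wh\sigma\,\wh\Phi=(ab)^{-d}\wh\phi\,\wh\sigma=\wh\sigma,
\]
while off $Q_\vareps$ both sides vanish; hence the series equals $\sigma$. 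Convergence in $\cS'$ is then immediate: by Paley--Wiener--Schwartz a tempered distribution with $\supp\wh\sigma$ compact is a smooth function of polynomial growth, so the samples $F(\lambda)$ grow at most polynomially and pair with the Schwartz translates $T_\lambda K$ to give an $\cS'$-convergent sum.

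The main obstacle is the deconvolution step, i.e.\ the well-definedness of $\wh K=\wh\phi/\wh\Phi$. Up to a reflection of variables $\wh\Phi$ equals the ambiguity function $V_gg$, and the construction requires $\wh\Phi$ to be nonvanishing on $Q_\vareps$; this is genuinely necessary, for if $\wh\Phi$ vanished somewhere on $Q_\vareps$ there would be nonzero symbols bandlimited to $Q_\vareps$ with identically zero diagonal, and no kernel could recover them. Nonvanishing holds automatically near the origin, where $\wh\Phi(0)=\norm{g}_2^2\neq0$, and on all of $Q_\vareps$ for suitable windows (for instance a Gaussian, whose ambiguity function never vanishes); I would isolate this nonvanishing as the hypothesis on $g$ that underlies both the existence of $K$ and the uniqueness of the reconstruction. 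The remaining verifications --- that $\wh K\in C_c^\infty$ forces $K\in\cS$, and that summation and Fourier transform may be interchanged --- are routine.
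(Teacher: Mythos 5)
Your proof is correct and is essentially the paper's own argument (carried out there as Proposition \ref{prop:recsymbdist}): identify the diagonal entries as the samples of the bandlimited function $\sigma \ast R(g,g)^\ast$, expand via the distributional sampling theorem (the paper cites Campbell, you derive it directly from Poisson summation in the oversampled regime), and deconvolve with the kernel $K=\cF^{-1}\left(\varphi/\overline{\cU V_g g}\right)$, which lies in $\cS(\rdd)$ precisely because a smooth compactly supported cutoff is divided by a nonvanishing Schwartz function. You were also right to flag the nonvanishing of $\overline{\cU V_g g}$ (your $\wh\Phi$) near $Q_\vareps$ as an indispensable hypothesis: the introductory statement of Theorem \ref{firsta} suppresses it, but the paper's precise version, Proposition \ref{prop:recsymbdist}, assumes it explicitly, and your observation that a point mass $\wh\sigma=\delta_{\omega_0}$ placed at a zero $\omega_0$ of the ambiguity function has identically vanishing diagonal shows that without this assumption no kernel can reconstruct.
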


Theorem~\ref{firsta} provides  a theoretical answer to a crucial point of
channel estimation: how can the channel be estimated from (partial) 
knowledge of the diagonal of the channel matrix? 

We formulate  several versions  of this theorem that reflect various
models and assumptions in wireless communications. In engineering it
is usually  assumed that the channel is a Hilbert-Schmidt operator and 
$\sigma \in \LtRd$. In this case, the series \eqref{eq:c1}
converges in $\Ltsp$, and \eqref{eq:c1} is
valid under weaker assumptions on the pulse~$g$.  In our
opinion, however, the distributional version offers a better model for 
signal propagation since 
``point scatterers'' correspond  precisely to 
point measures in the symbol. Moreover, for the stable recovery of the
coefficients $c_\lambda $ from  \eqref{eq:c4}, the channel
matrix must be invertible, which is certainly not the case for a
Hilbert-Schmidt operator. 

Theorem~\ref{firsta} can be interpreted as a result about operator
identification and shares several  aspects with the work of Pfander et
al. \cite{kopf06, pf08}. In a series of papers, they investigate the question under
what conditions a symbol $\sigma $ can be recovered from a single
measurement $\sigma ^{KN}f$ on  a suitable distribution  $f$.  Similarly
to Theorem~\ref{firsta}, their answer is expressed as a sampling
theorem and is valid for bandlimited symbols. 

Our second contribution is the analysis of some common 
assumptions  in the engineering community.  A common assumption in the wireless communications 
literature is that the channel matrix is diagonal so that its
inversion becomes trivial. Equivalently, this means that   the
channel matrix is diagonalized by the \tfs s of a suitable function.  There are numerous  papers 
building on this assumption, e.g. 
\cite{dmbssbook10, boduscsh10, grhahlmasc07, shhwdaka10}. 
We show that this assumption cannot  withstand mathematical scrutiny. 

 We prove that if the underlying Gabor system is a frame for
 $\LtRd$, but not a basis, then  the corresponding
channel matrix $H$ cannot be  diagonal, unless  the operator $\sigma^{KN}$ is 
identically zero. We further  prove that for  a non-zero \psdo\ with a
bandlimited symbol 
and a  Gaussian window, the channel matrix cannot vanish identically, quite
independently of the spanning properties of the Gabor system. 

The paper is organized as follows: In Section \ref{sec:preliminaries}, we summarize the mathematical preliminaries. 
In Section \ref{sec:reconstruction} we present the reconstruction formula for the symbol of the pseudodifferential 
operator, and in Section \ref{sec:uniqueness} we present the proposed uniqueness results.

\section{Preliminaries}
\label{sec:preliminaries}

We collect some concepts and definitions from time-frequency
analysis. The precise  details and proofs can be found in \cite{folland89} or in \cite{gr01}. 

The \textit{Fourier transform} of a function $f \in \LtRd$ is defined as
\begin{equation}
  \hat{f}(\xi) = \int_{\RR^d} f(x) e^{-2 \pi i \xi \cdot x} dx, \quad \xi \in \RR^d. \nonumber
\end{equation}
The two fundamental operators in time-frequency analysis are the
\textit{translation operators} $T_x$  
and the \textit{modulation operators} $M_\xi$ defined by 
\begin{equation}
  T_x f(t) = f(t-x) \quad\text{and}\quad M_\xi f(t) = e^{2 \pi i \xi \cdot t} f(t), \quad t,x,\xi \in \RR^d. \nonumber
\end{equation}
Their compositions  are the  \textit{time-frequency shift operators } $\pi$ defined as
\begin{equation}
    \pi(z) f(t) = M_\xi T_x f(t) = e^{2 \pi i \xi \cdot t} f(t-x),
    \quad \text{ for }  z = (x,\xi) \in \Rtd. \nonumber
\end{equation}

A  set of  time-frequency shifts of a non-zero window function $g \in \LtRd$ with 
respect to a lattice $\Lambda = a\ZZ^d \times b\ZZ^d \subseteq \Rtd, a,b > 0$,
\begin{equation} \label{equ:gabsys}
  \gabsys  = \{ \pi(\lambda) g: \lambda \in \Lambda\} \nonumber
\end{equation}
is called a \textit{Gabor system}. If there exist constants
$ A,B > 0$ such that for all $f \in \LtRd $
\begin{equation}
  A \| f \|^2 \leq \sum_{\lambda \in \Lambda } | \langle f, \pi
  (\lambda ) g \rangle |^2 \leq B \| f \|^2 \, , \nonumber
\end{equation}
then the set $\gabsys $ is called a \emph{Gabor frame} with
\textit{frame bounds} $A$ and $B $.

The \textit{short-time Fourier transform} (STFT) of a function or
distribution $f$ with respect to a non-zero window $g$ is defined as
\begin{eqnarray} \label{equ:stft}
  V_g f(x,\xi) 
  &=& \int_\Rd f(t) \overline{ g(t-x) } e^{-2 \pi i \xi \cdot t} dt \nonumber \\
  &=& \langle f, M_\xi T_x g \rangle = \langle f, \pi(z) g \rangle, \nonumber
\end{eqnarray}
for $z = (x,\xi )  \in \RR^{2d}$.	

The \textit{Rihaczek distribution} of two functions $f,g \in \LtRd$ is defined as
\begin{equation} \label{equ:rihaczek}
  R(f,g)(x,\xi) = f(x) \overline{\hat{g}(\xi)} e^{-2 \pi i x \cdot \xi}, 
\end{equation}
for $x,\xi \in \RR^d$.	The Rihaczek distribution and the short-time Fourier transform are related in the following way:
\begin{equation} \label{equ:rihaczekstft}
  \widehat{R(f,g)} = \mathcal{U} V_g f,
\end{equation}
where $\mathcal{U} F(\xi,x) = F(-x,\xi)$ and $x,\xi \in \RR^d$.

Let $\sigma$ be a (measureable) function or a tempered  distribution on
$\Rtd$. The bilinear form  
\begin{equation}\label{equ:kn2}
  \langle \sigma^{KN} f,g \rangle = \langle \sigma, R(g,f) \rangle, \quad f,g \in \mathcal{S}(\RR^d),
\end{equation}
defines a linear operator $\sigma ^{KN}$  from $\mathcal{S} (\RR ^d)$ to $\mathcal{S}'
(\RR ^d)$. The operator $\sigma ^{KN}$ is a \psdo\ in the
Kohn-Nirenberg calculus with \textit{Kohn-Nirenberg symbol} $\sigma $. 
If $\hat{\sigma }$ is a locally integrable  function, then the
\textit{Kohn-Nirenberg transform} can also be written as 
\begin{equation} \label{equ:kn}
  \sigma^{KN} f(x) = \iint_{\Rtd} \hat{\sigma}(\eta,u) M_\eta T_{-u}
  f(x) \,du d\eta \, .
\end{equation}
In engineering this version is called the spreading representation of
the \psdo\ and  $\hat{\sigma}$ in (\ref{equ:kn}) is known as the \textit{spreading
  function} of $\sigma^{KN}$, since it describes how much the function
$f$ is "spread out" in time and frequency under the action of
$\sigma^{KN}$. 

\section{Reconstruction Formula}
\label{sec:reconstruction}

We first deal with the question  if and how the symbol of a \psdo\
$\sigma ^{KN}$ can be reconstructed from the diagonal of the channel
matrix. 

Let $\Lambda = a\ZZ^d \times b\ZZ^d$ be a lattice in $\Rtd$, $g \in
\LtRd $ a non-zero window function,  and
$\mathcal{G}( g, \Lambda )$ be the corresponding  Gabor system. 
 The matrix $H$ of a  pseudodifferential operator $\sigma^{KN}$ with
 respect to the Gabor system $\gabsys$ is defined  as follows:
\begin{equation}
  H_{\lambda \mu} = \langle \sigma^{KN} \pi(\mu)g, \pi(\lambda)g \rangle, \quad \lambda,\mu \in \Lambda. \nonumber
\end{equation}
When  $\sigma^{KN}$ describes a wireless channel, the matrix $H$
describes the action of a wireless channel on  
certain transmit pulses and is therefore called the \textit{channel
  matrix}.

We remark that for the solution of the linear
equation~\eqref{eq:c4} $H$  must be invertible. In wireless
communications it is costumary to assume that $\mathcal{G}(g, \Lambda )$
is a Riesz basis for the generated subspace~\cite{grhahlmasc07, hamasc06, komo98, st01}.

We first  derive an alternative  expression for the diagonal entries
of $H$ in terms of the Rihaczek distribution of $g$. Let 
$\mathcal{F} \LiRtd$ denote the Fourier algebra on $\Rtd$ consisting
of all functions on $\Rtd$  with integrable Fourier transform. 
  
We have the following well-known lemma. 
\begin{lemma}\label{lem:diagelem}
  Let $\sigma \in \LpRtd, 1 \leq p < \infty$, $\hat{\sigma}$ compactly
  supported, $g \in \LiRd \cap \mathcal{F} \LiRtd$. The diagonal entries of $H$ can be written as follows
  \begin{equation} \label{equ:diagelem}
    H_{\lambda, \lambda} = \langle \sigma^{KN} \pi(\lambda)g, \pi(\lambda)g \rangle = \sigma \ast R(g,g)^\ast (\lambda), 
    \quad \lambda \in \Lambda, \nonumber
  \end{equation}
where $f^\ast (x) = \overline{f(-x)}$.
\end{lemma}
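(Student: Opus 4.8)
The plan is to reduce the diagonal entry to a pairing of $\sigma$ against the Rihaczek distribution, then to use a covariance property of that distribution to turn the pairing into a convolution evaluated at $\lambda$. First I would check that the hypotheses make every object below well defined: they force $R(g,g)$ to be bounded and integrable on $\rdd$, since $|R(g,g)(x,\xi)| = |g(x)|\,|\hat g(\xi)|$, so $R(g,g) \in \LiRtd \cap \textbf{\textit{L}}^\infty(\rdd)$ and hence $R(g,g) \in \textbf{\textit{L}}^{p'}(\rdd)$ for every exponent $p'$; together with $\sigma \in \LpRtd$ and H\"older's inequality this guarantees that the pairings and the convolution below converge absolutely. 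Applying the defining relation \eqref{equ:kn2} with both arguments equal to $\pi(\lambda)g$ gives
\[
  H_{\lambda,\lambda} = \langle \sigma^{KN}\pi(\lambda)g, \pi(\lambda)g\rangle = \langle \sigma, R(\pi(\lambda)g, \pi(\lambda)g)\rangle .
\]

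The heart of the argument is the covariance identity
\[
  R(\pi(\lambda)g, \pi(\lambda)g) = T_\lambda R(g,g), \qquad \lambda \in \Lambda .
\]
To prove it I would write $\lambda = (\lambda_1,\lambda_2)$ and compute the two factors of the Rihaczek distribution \eqref{equ:rihaczek} separately: on the time side $\pi(\lambda)g(x) = e^{2\pi i \lambda_2 \cdot x} g(x-\lambda_1)$, while the standard commutation of $T$ and $M$ under the Fourier transform gives $\widehat{\pi(\lambda)g}(\xi) = e^{-2\pi i \lambda_1 \cdot (\xi-\lambda_2)}\hat g(\xi-\lambda_2)$. Substituting these into $R(\pi(\lambda)g, \pi(\lambda)g)(x,\xi) = \pi(\lambda)g(x)\,\overline{\widehat{\pi(\lambda)g}(\xi)}\,e^{-2\pi i x\cdot\xi}$, the amplitude $g(x-\lambda_1)\overline{\hat g(\xi-\lambda_2)}$ already matches the amplitude of $R(g,g)(x-\lambda_1,\xi-\lambda_2)$, and it remains to reconcile the phases. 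The hard part is exactly this phase bookkeeping: after conjugating the frequency-side factor the combined phase is $e^{2\pi i[\lambda_2\cdot x + \lambda_1\cdot\xi - \lambda_1\cdot\lambda_2 - x\cdot\xi]}$, and one must recognise this as $e^{-2\pi i (x-\lambda_1)\cdot(\xi-\lambda_2)}$, the phase appearing in $R(g,g)(x-\lambda_1,\xi-\lambda_2)$. Keeping straight the signs of the linear and bilinear terms, in particular the sign flipped by the complex conjugation on the frequency side, is the only place where an error could creep in.

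Granting the covariance identity, the last step is routine. Writing $\phi = R(g,g)$ and reading $\langle\cdot,\cdot\rangle$ as the extension of the inner product on $\rdd$ (conjugate-linear in the second argument, consistent with the convention $f^\ast(x) = \overline{f(-x)}$ used in the statement), I would compute
\[
  \langle \sigma, T_\lambda\phi\rangle = \int_{\rdd}\sigma(z)\,\overline{\phi(z-\lambda)}\,dz = \int_{\rdd}\sigma(z)\,\phi^\ast(\lambda-z)\,dz = (\sigma \ast \phi^\ast)(\lambda),
\]
where I used $\overline{\phi(z-\lambda)} = \overline{\phi(-(\lambda-z))} = \phi^\ast(\lambda-z)$. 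This yields $H_{\lambda,\lambda} = (\sigma \ast R(g,g)^\ast)(\lambda)$, as claimed. The one conceptual point worth flagging is that \eqref{equ:kn2} is stated only for Schwartz arguments, whereas here one only has $g \in \LiRd \cap \cF\LiRtd$; I would justify the identity either by a density argument or, more directly, by noting that the compact support of $\hat\sigma$ makes $\sigma$ a smooth bandlimited function, so that $\langle \sigma, R(\pi(\lambda)g,\pi(\lambda)g)\rangle$ is the absolutely convergent integral above and \eqref{equ:kn2} holds as a genuine equality.
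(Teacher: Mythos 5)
Your proposal is correct and takes essentially the same route as the paper: reduce the diagonal entry to the pairing $\langle \sigma, R(\pi(\lambda)g,\pi(\lambda)g)\rangle$ via \eqref{equ:kn2}, apply the covariance identity $R(\pi(\lambda)g,\pi(\lambda)g)=T_\lambda R(g,g)$, and rewrite the pairing as the convolution $\sigma \ast R(g,g)^\ast(\lambda)$. The only differences are in bookkeeping: you verify the covariance identity by direct computation where the paper cites \cite[Lemma 4.2]{grst07}, and you obtain everywhere-defined absolute convergence from H\"older (using $R(g,g)\in \textbf{\textit{L}}^{p'}(\Rtd)$), whereas the paper first gets the identity for almost every $\lambda$ and then upgrades it to all $\lambda$ by analyticity of the bandlimited function $\sigma \ast R(g,g)^\ast$ -- both resolutions are sound, and you also rightly flag (as the paper silently assumes) that extending \eqref{equ:kn2} beyond Schwartz windows needs a density or boundedness argument.
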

\begin{proof}
Under the given assumptions, $\sigma$ is infinitely differentiable and $D^\alpha \sigma$ is bounded for all multi-indices $\alpha$.
The standard theory of pseudodifferential operators implies that
$\sigma^{KN}$ is bounded on
$\LtRd$~\cite{folland89,HorIII85}. Consequently  the mapping $\lambda
\in \Rtd \rightarrow \langle \sigma^{KN} \pi(\lambda)g,\pi(\lambda)g \rangle$ is
continuous, and  the 
channel matrix is well-defined.

Using the definition of the Rihaczek distribution (\ref{equ:rihaczek}), we get
\begin{equation} \label{equ:rhihaczekL2}
  \lVert R(g,g)^\ast \rVert_1 = \lVert g \otimes \hat{g} \rVert_1 = \lVert g \rVert_1 \cdot \lVert \hat{g} \rVert_1 < \infty. \nonumber
\end{equation}
Since $\sigma \in \LpRtd$, the convolution $\sigma \ast R(g,g)^\ast$ is well-defined in the $\Lpsp$-sense.
	
From the intertwining property of the Rihaczek distribution \cite[Lemma 4.2]{grst07}, we have
\begin{equation} \label{equ:rihaczekinterspec}
  R(\pi(\lambda)g,\pi(\lambda)g)(z) = R(g,g)(z-\lambda).
\end{equation}
Combining the definition of the Kohn-Nirenberg transform (\ref{equ:kn2}) and Equation (\ref{equ:rihaczekinterspec}), we obtain
\begin{eqnarray}
  \langle \sigma^{KN} \pi(\lambda)g, \pi(\lambda)g \rangle \nonumber
  &=& \langle \sigma, R(\pi(\lambda)g,\pi(\lambda)g) \rangle \nonumber \\
  &=& \int_{\RR^{2d}} \sigma(z) \overline{R(\pi(\lambda)g,\pi(\lambda)g)}(z) dz \nonumber \\
  &=& \int_{\RR^{2d}} \sigma(z) \overline{R(g,g)}(z-\lambda) dz \nonumber \\
  &=& \int_{\RR^{2d}} \sigma(z) R(g,g)^\ast (\lambda - z) dz \nonumber \\
  &=& \sigma \ast R(g,g)^\ast (\lambda). \label{equ:sigmari}
\end{eqnarray}
In general, (\ref{equ:sigmari}) is valid for almost every $\lambda \in \Rtd$. 
Since 
$$
  \supp \mathcal{F} \Big(\sigma \ast R(g,g)^\ast \Big)
  = \supp \Big(\hat{\sigma} \cdot \widehat{R(g,g)^\ast} \Big) 
  \subseteq \supp \hat{\sigma} \, ,
$$
is compact, $\sigma \ast R(g,g)^\ast$ is an analytic function, and therefore (\ref{equ:sigmari}) 
is valid for every $\lambda \in \Rtd$.
\end{proof}

By combining the observation  of Lemma \ref{lem:diagelem} with the
classical  sampling
theorem for band-limited functions, we obtain a  reconstruction
formula for the symbol of a \psdo\ from the diagonal of the channel
matrix. In the formulation of the multivariate version of the
Shannon-sampling theorem, we need the  ``sinc''-function adapted to
a lattice $\Lambda = a\ZZ ^d \times b\ZZ ^d \subseteq \Rtd$, namely
$$
\sinc (x) = \prod _{j=1}^d \frac{\sin \pi a x_j}{\pi a x_j}\prod
_{j=d+1}^{2d} \frac{\sin \pi b x_j}{\pi b x_j} \, .
$$
Then every function $\sigma \in \LtRtd $ with $\supp \hat{f} \subseteq 
Q = [-\frac{1}{2a},\frac{1}{2a}]^d \times
[-\frac{1}{2b},\frac{1}{2b}]^d$  possesses the cardinal series
expansion
\begin{equation}
  \label{eq:c7}
f = \sum _{\lambda \in \Lambda } f(\lambda ) T_\lambda \sinc   \nonumber
\end{equation}
 with convergence in $\Ltsp$ and uniformly. For the general theory of
Shannon sampling we refer to ~\cite{buspst88, ma91, pa97}. 

The first  reconstruction formula
is stated for symbols  $\sigma $  in $ \LpRtd, 1 \leq p < \infty$.

\begin{theorem} \label{thm:reconstructLp}
  Let $\sigma \in \LpRtd, 1 \leq p < \infty$,
  $\supp \hat{\sigma} \subseteq Q = [-\frac{1}{2a},\frac{1}{2a}]^d \times [-\frac{1}{2b},\frac{1}{2b}]^d$ and 
  $g \in \LiRd \cap \mathcal{F}\LiRd$. Choose $\varphi \in \mathcal{C}_c^\infty(\Rtd)$ such that  $\varphi = 1$ on $Q$, define
  $K = \mathcal{F}^{-1} \left( \frac{\varphi}{\;\overline{\mathcal{U} V_g g}\;} \right)$ and assume that $\overline{\mathcal{U} V_g g}$ 
  does not vanish on $\supp \varphi$. Then the symbol $\sigma$ can be reconstructed from the diagonal entries 
  $H_{\lambda, \lambda} = \langle \sigma^{KN} \pi(\lambda)g, \pi(\lambda)g \rangle$ of the channel matrix via the modified cardinal series
    \begin{equation} \label{equ:reconstructLp}
      \sigma = \frac{1}{(ab)^d} \sum_{\lambda \in \Lambda} H_{\lambda, \lambda} T_{\lambda} (\sinc \ast K).
    \end{equation}
  The sum converges absolutely, uniformly and in $\LpRtd $. 
  If $p = 1$, the reconstruction formula is still valid, but the
  series converges only in  $\textbf{\textit{L}}^q (\Rtd)$ for $ q > 1$, but	
  not  in $\textbf{\textit{L}}^1 (\Rtd)$.
\end{theorem}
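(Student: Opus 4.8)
The plan is to read off the diagonal of $H$ as samples of a bandlimited function, invert the cardinal series, and fold a deconvolution into the kernel $K$. Set $F=\sigma\ast R(g,g)^\ast$, so that Lemma~\ref{lem:diagelem} identifies the data as samples, $H_{\lambda,\lambda}=F(\lambda)$ for $\lambda\in\Lambda$. On the Fourier side $\widehat{R(g,g)^\ast}=\overline{\widehat{R(g,g)}}=\overline{\mathcal{U}V_g g}$, by \eqref{equ:rihaczekstft} and the elementary identity $\widehat{f^\ast}=\overline{\hat f}$, so that
\begin{equation}
\widehat{F}=\hat{\sigma}\cdot\overline{\mathcal{U}V_g g},\qquad \supp\widehat{F}\subseteq\supp\hat{\sigma}\subseteq Q. \nonumber
\end{equation}
Thus $F$ is bandlimited to $Q$. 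Since $\sigma\in\LpRtd$ and $R(g,g)^\ast\in\LiRtd$ (with $\|R(g,g)^\ast\|_1=\|g\|_1\|\hat g\|_1$), Young's inequality gives $F\in\LpRtd$, so the sampling theory for bandlimited $\textbf{\textit{L}}^p$-functions (Plancherel--P\'olya, Nikolskii) applies.

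First I would expand $F$ in its cardinal series, $F=\tfrac{1}{(ab)^d}\sum_{\lambda\in\Lambda}F(\lambda)\,T_\lambda\sinc$, in the normalization matching the definition of $\sinc$ used here. Next I would verify that $K$ is an exact deconvolver: since $\overline{\mathcal{U}V_g g}$ does not vanish on $\supp\varphi$, the symbol $\hat K=\varphi/\overline{\mathcal{U}V_g g}$ is a well-defined continuous function of compact support, and
\begin{equation}
\widehat{F\ast K}=\widehat{F}\,\hat K=\hat{\sigma}\cdot\overline{\mathcal{U}V_g g}\cdot\frac{\varphi}{\overline{\mathcal{U}V_g g}}=\hat{\sigma}\,\varphi=\hat{\sigma}, \nonumber
\end{equation}
the last step because $\varphi\equiv1$ on $Q\supseteq\supp\hat{\sigma}$; hence $F\ast K=\sigma$. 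Convolving the cardinal series with $K$ and passing $\ast K$ through the sum then produces exactly \eqref{equ:reconstructLp}, with $H_{\lambda,\lambda}$ in place of $F(\lambda)$. The interchange of $\ast K$ with the summation must be justified in a topology in which it is continuous, which is where the convergence analysis enters.

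The convergence claims hinge on the interpolation kernel $\Psi:=\sinc\ast K$, whose transform is $\widehat{\Psi}=\widehat{\sinc}\,\hat K=c\,\mathbf{1}_Q/\overline{\mathcal{U}V_g g}$. This is bounded with compact support and continuous in the interior of $Q$, but it jumps across $\partial Q$; consequently $\Psi$ is bounded and bandlimited and lies in $\textbf{\textit{L}}^q(\Rtd)$ for every $q>1$, while the $1/\lvert x\rvert$-type tails created by the jump keep $\Psi\notin\LiRtd$. By Plancherel--P\'olya the samples satisfy $\{H_{\lambda,\lambda}\}_\lambda=\{F(\lambda)\}_\lambda\in\ell^p(\Lambda)$. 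Absolute and uniform convergence of \eqref{equ:reconstructLp} then follow from
\begin{equation}
\sum_{\lambda\in\Lambda}\lvert F(\lambda)\rvert\,\lvert\Psi(x-\lambda)\rvert\le\big\|\{F(\lambda)\}\big\|_{\ell^p}\Big(\sum_{\lambda\in\Lambda}\lvert\Psi(x-\lambda)\rvert^{p'}\Big)^{1/p'}, \nonumber
\end{equation}
together with the fact that a bandlimited $\textbf{\textit{L}}^{p'}$-function belongs to the Wiener amalgam space $W(C_0,\ell^{p'})$, so that the last factor is bounded uniformly in $x$; for $1<p<\infty$ one has $p'\in(1,\infty)$ and $\Psi\in\textbf{\textit{L}}^{p'}(\Rtd)$, while for $p=1$ one uses $\{F(\lambda)\}\in\ell^1$ and $\Psi\in\textbf{\textit{L}}^\infty(\Rtd)$ directly.

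For the mode of norm convergence I would split into cases. For $1<p<\infty$ the cardinal series of the bandlimited function $F$ converges in $\textbf{\textit{L}}^p$, and applying $\ast K$ termwise yields convergence of \eqref{equ:reconstructLp} in $\LpRtd$, provided convolution by $K$ is continuous on the subspace of functions with spectrum in $Q$; this reduces to a Fourier-multiplier estimate for $1/\overline{\mathcal{U}V_g g}$ on $Q$. For $p=1$ the individual terms $F(\lambda)\,T_\lambda\Psi$ are not even in $\LiRtd$, since $\Psi\notin\LiRtd$, so convergence in $\textbf{\textit{L}}^1$ is impossible; but $\Psi\in\textbf{\textit{L}}^q(\Rtd)$ for every $q>1$ and $\{F(\lambda)\}\in\ell^1\subseteq\ell^q$, which gives absolute convergence in $\textbf{\textit{L}}^q(\Rtd)$ for all $q>1$. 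I expect the main obstacle to be exactly this kernel and multiplier analysis: establishing the sharp integrability $\Psi\in\bigcap_{q>1}\textbf{\textit{L}}^q(\Rtd)\setminus\LiRtd$, the Wiener amalgam bound that upgrades $\ell^p$-summability of the diagonal to absolute and uniform convergence, and the $\textbf{\textit{L}}^p$-boundedness of the deconvolution under the sole hypothesis $g\in\LiRd\cap\mathcal{F}\LiRd$, all while keeping track of the normalizing constant coming from $\widehat{\sinc}$.
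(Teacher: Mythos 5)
Your overall strategy coincides with the paper's: identify the diagonal as samples of the bandlimited function $F=\sigma\ast R(g,g)^\ast$ via Lemma~\ref{lem:diagelem}, expand $F$ in the cardinal series, and deconvolve with $K$. But there is a genuine gap at precisely the step you yourself flag as the ``main obstacle'': you never establish any integrability of $K$, and without it neither the deconvolution identity $F\ast K=\sigma$, nor the termwise interchange of $\ast K$ with the summation, nor your claimed properties of $\Psi=\sinc\ast K$ are justified. Continuity and compact support of $\hat{K}=\varphi/\overline{\mathcal{U}V_gg}$ (which is all you derive) does \emph{not} imply $K\in\LiRtd$, nor that $\hat{K}$ is an $\Lpsp$ Fourier multiplier for $p\neq 2$: compactly supported continuous functions need not belong to the Fourier algebra $\mathcal{F}\LiRtd$, and the multiplier estimate you defer to is exactly what is missing. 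Similarly, your assertion that $\Psi\in\textbf{\textit{L}}^q(\Rtd)$ for every $q>1$ rests on a heuristic about jump discontinuities (``$1/\lvert x\rvert$-type tails''); for $q<2$ this does not follow from $\hat{\Psi}$ being bounded with compact support, so your H\"older/amalgam bound for $p>2$ (where $p'<2$) is unsupported.

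The missing idea, and the linchpin of the paper's proof, is the Wiener--L\'evy theorem. Since $R(g,g)^\ast\in\LiRtd$ (this is where the hypothesis $g\in\LiRd\cap\mathcal{F}\LiRd$ enters), its Fourier transform $\overline{\mathcal{U}V_gg}$ lies in $\mathcal{F}\LiRtd$; because it is nonvanishing on $\supp\varphi$, Wiener--L\'evy yields $\psi\in\mathcal{F}\LiRtd$ with $\psi=1/\overline{\mathcal{U}V_gg}$ on $\supp\varphi$, and since $\varphi\in\mathcal{C}_c^\infty(\Rtd)\subset\mathcal{F}\LiRtd$ and $\mathcal{F}\LiRtd$ is an algebra, $\hat{K}=\varphi\,\psi\in\mathcal{F}\LiRtd$, i.e.\ $K\in\LiRtd$. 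With $K\in\LiRtd$ everything you need follows from Young's inequality: $F\ast K$ is well defined and equals $\sigma$; convolution by $K$ is bounded on every $\LpRtd$ and on $\textbf{\textit{L}}^\infty(\Rtd)$, so the series \eqref{equ:reconstructLp} inherits absolute, uniform and $\Lpsp$ convergence from the cardinal series of $F$ (for whose convergence, including the failure of $\textbf{\textit{L}}^1$-convergence when $p=1$, the paper simply cites the cardinal-series literature); and $\Psi=\sinc\ast K\in\textbf{\textit{L}}^q(\Rtd)$ for all $q>1$ because $\sinc\in\textbf{\textit{L}}^q(\Rtd)$ for $q>1$. Your Plancherel--P\'olya/amalgam estimate is a viable alternative route to absolute and uniform convergence, but it too needs $K\in\LiRtd$ to get off the ground, so the Wiener--L\'evy step cannot be bypassed.
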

\begin{proof}
 We apply the multivariate version of the classical
Shannon-Whittaker-Kotelnikov sampling theorem with the lattice
$\Lambda = a \ZZ ^d \times  b \ZZ ^d$ to the bandlimited function
$\sigma \ast R(g,g)^\ast $.

We recall from Lemma \ref{lem:diagelem} that $H_{\lambda,\lambda} =
\sigma \ast R(g,g)^\ast (\lambda)$ and write 
\begin{equation} \label{equ:sigmarggsinc}
  \sigma \ast R(g,g)^\ast = \frac{1}{(ab)^d} \sum_{\lambda \in \Lambda} H_{\lambda,\lambda} T_\lambda \sinc,
\end{equation}
According to the $\Lpsp$-theory of the cardinal
series~\cite{boche08,si92}   the sum converges absolutely, uniformly, 
and in $\LpRtd$ for $ 1 < p < \infty$. 
  This sampling expansion holds pointwise also for $p=1$, but the
  convergence is then only in $\textbf{\textit{L}}^q (\Rtd)$ for $ q >
  1$. 

Since $R(g,g)^\ast \in \LiRtd$, formula~\eqref{equ:rihaczekstft} implies that  $\overline{\mathcal{U} V_g g} = \widehat{R(g,g)^\ast}  \in \mathcal{F} \LiRtd$. 
Since $\overline{\mathcal{U} V_g g} \neq 0$ on $\supp \varphi$, we apply the Wiener-L\'evy theorem, see \cite[Theorem 1.3.1]{re68}, 
to conclude that there exists a function $\psi \in \mathcal{F} \LiRtd$ such that 
$\psi = \frac{1}{\;\overline{\mathcal{U} V_g g}\;}$ on $\supp \varphi$. Since $\varphi \in \mathcal{C}_c^\infty(\Rtd)$, $\varphi$ is 
also in $\mathcal{F} \LiRtd$ and we have $\varphi \,\psi = \frac{\varphi}{\;\overline{\mathcal{U} V_g g}\;} \in \mathcal{F} \LiRtd$. 
Thus we conclude that $K  = \mathcal{F}^{-1} \left(
  \frac{\varphi}{\;\overline{\mathcal{U} V_g g}\;} \right) \in \LiRtd$. 

Now we claim that
\begin{equation} \label{equ:deconvolvesigma}
  \sigma \ast R(g,g)^\ast \ast K = \sigma.
\end{equation}
To prove this, we take  the Fourier transform of
(\ref{equ:deconvolvesigma}) and obtain 
\begin{equation} \label{equ:deconvolvesigma2}
  \hat{\sigma} \cdot \widehat{R(g,g)^\ast} \cdot \hat{K} 
  = \hat{\sigma} \cdot \overline{\mathcal{U} V_g g} \cdot \overline{\mathcal{U} V_g g}^{-1} \cdot \varphi
  = \hat{\sigma}, \nonumber
\end{equation}
because $\varphi = 1$ on $\supp \hat{\sigma}$.

Finally we combine (\ref{equ:sigmarggsinc}) and (\ref{equ:deconvolvesigma}) to compute
\begin{equation}
  \sigma 
  = \sigma \ast R(g,g)^\ast \ast K
  = \frac{1}{(ab)^d} \sum_{\lambda \in \Lambda} H_{\lambda,\lambda} T_\lambda (\sinc \ast K). \label{equ:reconstructsigmaLp}
\end{equation}
Since we convolve $\sigma \ast R(g,g)^\ast \in \LpRtd$ with $K \in \LiRtd$, the series in (\ref{equ:reconstructsigmaLp}) 
inherits the convergence properties from (\ref{equ:sigmarggsinc}). 
\end{proof}

Similar reconstruction formulas are valid when one or several
side-diagonals of the channel matrix $H$  are
known~\cite[Chpt.4]{pa11}. In the engineering practice the diagonal
entries are estimated only on a sublattice $\Lambda _p \subseteq
\Lambda $ by means of pilot symbols~\cite{coerpuba02, best03}.  In this case one
applies Theorem~\ref{thm:reconstructLp} to a sublattice of the full
lattice. 

In the case $p = 2$ we can weaken the assumptions on $g$ considerably to $g \in \LtRd$. This case is important because it treats
the (unjustified) assumption of the engineering community that wireless channels are Hilbert-Schmidt operators \cite{kopf06,pfwa06}. 
In addition, it covers the rectangular window $g =
\chi_{[\alpha,\beta]^d}$ corresponding to OFDM without pulse-shaping  
\cite{coerpuba02, fema10, best03}.

\begin{proposition}
  With the notation of Theorem \ref{thm:reconstructLp} assume that $\sigma \in \LtRtd$ and $g \in \LtRd$. Then $\sigma$ can be
  reconstructed from $(H_{\lambda,\lambda})_{\lambda \in \Lambda}$ by 
  \begin{equation} \label{equ:reconstructL2}
    \sigma = \frac{1}{(ab)^d} \sum_{\lambda \in \Lambda} H_{\lambda, \lambda} T_{\lambda} (\sinc \ast K).
  \end{equation}
  with convergence in $\LtRtd$ and uniform convergence.
\end{proposition}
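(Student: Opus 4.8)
The plan is to run the proof of Theorem~\ref{thm:reconstructLp} once more, replacing every appeal to the Fourier algebra $\mathcal{F}\LiRtd$ (which forced the hypothesis $g\in\LiRd\cap\mathcal{F}\LiRd$) by Hilbert-space reasoning inside the Paley--Wiener space. First I would re-establish the diagonal identity of Lemma~\ref{lem:diagelem} under the weaker assumption $g\in\LtRd$. Since $\sigma\in\LtRtd$, the operator $\sigma^{KN}$ is Hilbert--Schmidt on $\LtRd$, so $\sigma^{KN}\pi(\lambda)g$ lies in $\LtRd$ and $H_{\lambda,\lambda}$ is a genuine inner product. Moreover $R(g,g)=g\otimes\hat g$ up to a unimodular factor, hence belongs to $\LtRtd$, so the pairing $\langle\sigma,R(\pi(\lambda)g,\pi(\lambda)g)\rangle$ is an honest $\Ltsp$-pairing; both sides of $\langle\sigma^{KN}f,h\rangle=\langle\sigma,R(h,f)\rangle$ are continuous in $(f,h)\in\LtRd\times\LtRd$ and agree on $\mathcal{S}(\rd)$, so the identity extends by density. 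Using the covariance relation~\eqref{equ:rihaczekinterspec} exactly as before yields $H_{\lambda,\lambda}=\sigma\ast R(g,g)^\ast(\lambda)$, where now $\sigma\ast R(g,g)^\ast$ is the convolution of two $\Ltsp$-functions, hence bounded and continuous; its Fourier transform $\hat{\sigma}\cdot\overline{\mathcal{U}V_g g}$ is supported in $\supp\hat{\sigma}\subseteq Q$ and lies in $\LtRtd$, so $F:=\sigma\ast R(g,g)^\ast$ is an $\LtRtd$-function bandlimited to $Q$.

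Next I would apply the $\Ltsp$ (Paley--Wiener) form of the sampling theorem to $F$, which gives $F=\frac{1}{(ab)^d}\sum_{\lambda}H_{\lambda,\lambda}T_\lambda\sinc$ with convergence both in $\LtRtd$ and uniformly. The only genuinely new point, and the main obstacle, is the construction of $K$ and the deconvolution: with $g$ merely in $\LtRd$ we no longer have $R(g,g)^\ast\in\LiRtd$, so the Wiener--L\'evy theorem is unavailable and we cannot force $K\in\LiRtd$. The resolution is that we do not need it. Because $g\in\LtRd$, the function $\overline{\mathcal{U}V_g g}=\widehat{R(g,g)^\ast}$ is continuous; by hypothesis it is non-vanishing on the compact set $\supp\varphi$, so $\varphi/\overline{\mathcal{U}V_g g}$ is continuous with compact support, whence $K=\mathcal{F}^{-1}\big(\varphi/\overline{\mathcal{U}V_g g}\big)\in\LtRtd$ is well defined and itself bandlimited. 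Taking Fourier transforms gives $\widehat{F\ast K}=\hat{\sigma}\cdot\overline{\mathcal{U}V_g g}\cdot\big(\varphi/\overline{\mathcal{U}V_g g}\big)=\hat{\sigma}\,\varphi=\hat{\sigma}$, using $\varphi=1$ on $\supp\hat{\sigma}$; since $\hat{\sigma}\in\LtRtd$, Plancherel yields $F\ast K=\sigma$.

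Finally I would transfer the convergence through the convolution with $K$. The cleanest route is to observe that $\sinc$, $K$, and every partial sum are bandlimited to $Q$ (note $Q\subseteq\supp\varphi$), so the entire computation takes place in the Paley--Wiener space of $\LtRtd$-functions whose Fourier transform is supported in $Q$. On this space convolution with $K$ acts as the Fourier multiplier $\hat{K}=\varphi/\overline{\mathcal{U}V_g g}$, which on $Q$ equals $1/\overline{\mathcal{U}V_g g}$ and is bounded there, being continuous and non-vanishing on the compact set $Q$. Hence $F\mapsto F\ast K$ is a bounded operator on this Paley--Wiener space, so it commutes with the $\LtRtd$-convergent sampling series, and the convergence of the partial sums of $F$ carries over to $\LtRtd$-convergence of $\frac{1}{(ab)^d}\sum_\lambda H_{\lambda,\lambda}T_\lambda(\sinc\ast K)$ to $\sigma$. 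For the uniform convergence I would instead invoke the elementary bound $\|h\ast K\|_\infty\le\|h\|_2\,\|K\|_2$ from Cauchy--Schwarz, applied to $h=F-F_N$ the tail of the sampling expansion: since $h\to0$ in $\LtRtd$, we get $h\ast K\to0$ uniformly, which is exactly the uniform convergence claimed.
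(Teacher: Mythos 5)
Your proof is correct and follows essentially the same route as the paper's: both rest on the two observations that $\widehat{R(g,g)^\ast}=\overline{\mathcal{U} V_g g}$ is bounded (so that $\sigma \ast R(g,g)^\ast \in \LtRtd$ is bandlimited and the $\Ltsp$-sampling theorem applies) and that the multiplier $\hat{K}=\varphi\cdot\overline{\mathcal{U} V_g g}^{-1}$ is bounded (so that deconvolution is a bounded operation on $\LtRtd$). You additionally spell out two details the paper leaves implicit --- the extension of Lemma \ref{lem:diagelem} to $g\in\LtRd$ via the Hilbert--Schmidt property and density, and the uniform convergence via the Cauchy--Schwarz bound $\| h\ast K\|_\infty \le \| h\|_2\,\| K\|_2$ applied to the tails --- which is a welcome refinement of the same argument, not a different approach.
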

\begin{proof}
The proof requires only a minor modification. Since $\widehat{R(g,g)^\ast} = \overline{\mathcal{U} V_g g}$ is bounded, the product
$\hat{\sigma} \cdot \widehat{R(g,g)^\ast}$ is in $\LtRtd$ with support in $Q$. Thus $\sigma \ast R(g,g)^\ast \in \LtRtd$ is 
bandlimited and the sampling reconstruction (\ref{equ:sigmarggsinc}) holds with uniform convergence and convergence in $\LtRtd$.
  
Finally, since $\overline{\mathcal{U} V_g g}$ does not vanish on $\supp \varphi$ by assumption, the multiplier 
$\hat{K} = \varphi \cdot \overline{\mathcal{U} V_g g}^{-1}$ is bounded, and therefore the operator $F \mapsto F \ast K$ is bounded
on $\LtRtd$. Consequently, the deconvolution formulas (\ref{equ:deconvolvesigma}) and (\ref{equ:deconvolvesigma2}) are well-defined on
$\LtRtd$ and the reconstruction (\ref{equ:reconstructL2}) follows. 
\end{proof}

Finally we formulate a distributional version of the reconstruction theorem. This version is not just for the sake of mathematical
generalization, but is necessary for the accurate modelling of physical channels. For example, a single point scatterer with time delay 
$\tau$ and Doppler shift $\nu$ has the point measure $\delta_{(\tau,\nu)}$ as its spreading function. A typical spreading
function is usually written as a distributional part plus a random component \cite{hlma06}. By adapting the hypothesis of Theorem 
\ref{thm:reconstructLp} we obtain the following statement.
\begin{proposition} \label{prop:recsymbdist}
  Let $\sigma \in \sch'(\Rtd)$, $\supp \hat{\sigma} \subseteq 
  Q_\vareps = [-\frac{1}{2a} + \vareps,\frac{1}{2a} - \vareps]^d \times [-\frac{1}{2b} + \vareps,\frac{1}{2b} - \vareps]^d$ for 
  some $\vareps > 0$ and $g \in \sch(\RR^d)$. Choose $\varphi \in \mathcal{C}_c^\infty(\Rtd)$ such that  $\varphi = 1$ on $Q_\vareps$, 
  $\supp \varphi \subseteq [-\frac{1}{2a},\frac{1}{2a}]^d \times [-\frac{1}{2b},\frac{1}{2b}]^d$ and assume that 
  $\overline{\mathcal{U} V_g g}$ does not vanish on $\supp \varphi$. Then the symbol $\sigma$ can be reconstructed from the diagonal
  entries $H_{\lambda, \lambda} = \langle \sigma^{KN} \pi(\lambda)g, \pi(\lambda)g \rangle$ of the channel matrix via the 
  modified cardinal series
  \begin{equation} \label{equ:reconstructdistribution}
    \sigma = \frac{1}{(ab)^d} \sum_{\lambda \in \Lambda} H_{\lambda, \lambda} T_{\lambda} \mathcal{F}^{-1} 
    \left( \frac{\varphi}{\;\overline{\mathcal{U} V_g g}\;} \right),
  \end{equation}
  with distributional convergence.
\end{proposition}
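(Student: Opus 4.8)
The plan is to follow the architecture of the proof of Theorem~\ref{thm:reconstructLp}, replacing the $\Lpsp$-sampling theorem by its distributional analogue and exploiting that the window now lies in $\sch(\rd)$. First I would record that, since $g\in\sch(\rd)$, the Rihaczek distribution $R(g,g)$ and hence $R(g,g)^\ast$ belong to $\sch(\Rtd)$; consequently the computation in Lemma~\ref{lem:diagelem} carries over verbatim with the $\Lpsp$-pairing replaced by the distributional pairing, yielding $H_{\lambda,\lambda}=\big(\sigma\ast R(g,g)^\ast\big)(\lambda)$ for every $\lambda\in\Lambda$. As the convolution of a tempered distribution with a Schwartz function, $F:=\sigma\ast R(g,g)^\ast$ is a smooth function of polynomial growth (this is also consistent with the Paley--Wiener--Schwartz theorem, which makes $\sigma$ itself a function since $\supp\hat\sigma$ is compact). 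Moreover $\hat F=\hat\sigma\cdot\widehat{R(g,g)^\ast}=\hat\sigma\cdot\overline{\mathcal{U}V_gg}$ is supported in $\supp\hat\sigma\subseteq Q_\vareps$, so $F$ is a bandlimited tempered function whose samples $F(\lambda)=H_{\lambda,\lambda}$ grow at most polynomially in $\lambda$.

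Next I would analyse the kernel. Writing $m:=\overline{\mathcal{U}V_gg}=\widehat{R(g,g)^\ast}\in\sch(\Rtd)$, the hypothesis that $m\neq0$ on the compact set $\supp\varphi$ gives, by continuity, a bound $|m|>0$ on a neighbourhood of $\supp\varphi$; hence $\varphi/m\in\mathcal{C}_c^\infty(\Rtd)$ and $K=\mathcal{F}^{-1}(\varphi/m)\in\sch(\Rtd)$. Here the Wiener--L\'evy argument of Theorem~\ref{thm:reconstructLp} is unnecessary, because the Schwartz hypothesis on $g$ renders the reciprocal directly smooth. On the Fourier side the deconvolution identity then reads $\hat F\cdot\hat K=(\hat\sigma\,m)(\varphi/m)=\varphi\,\hat\sigma=\hat\sigma$, the last step because $\varphi=1$ on $Q_\vareps\supseteq\supp\hat\sigma$; this is the distributional counterpart of~\eqref{equ:deconvolvesigma}.

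The heart of the argument is the distributional sampling step. Since $F(\lambda)$ is polynomially bounded, both $\sum_\lambda F(\lambda)\delta_\lambda$ and its Fourier transform $\sum_\lambda F(\lambda)e^{-2\pi i\lambda\cdot}$ are tempered distributions, and the \psf\ gives $\sum_{\lambda\in\Lambda}F(\lambda)e^{-2\pi i\lambda\cdot}$ as a constant multiple of the $\Lambda^{\perp}$-periodization $\sum_{\nu}\hat F(\cdot-\nu)$ of $\hat F$, where $\Lambda^{\perp}=\tfrac1a\ZZ^d\times\tfrac1b\ZZ^d$, with equality in $\sch'(\Rtd)$. Multiplying by the smooth, $Q$-supported factor $\hat K=\varphi/m$ and using that the translate $\hat F(\cdot-\nu)$ is supported in $Q_\vareps+\nu$ — which meets $\supp\varphi\subseteq Q$ only for $\nu=0$, by the oversampling hypothesis $Q_\vareps\subsetneq Q$ — collapses the periodization to its central term, so $\hat K\cdot\sum_\lambda F(\lambda)e^{-2\pi i\lambda\cdot}$ is a constant multiple of $\hat K\,\hat F=\hat\sigma$. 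Taking the inverse Fourier transform, interchanging it with the series, and matching the \psf\ normalization exactly as in the proof of Theorem~\ref{thm:reconstructLp}, one obtains $\sigma=\tfrac{1}{(ab)^d}\sum_{\lambda}H_{\lambda,\lambda}\,T_\lambda K$, which is~\eqref{equ:reconstructdistribution}. Convergence in $\sch'$ is then transparent: pairing with a test function $\psi\in\sch(\Rtd)$ turns the series into $\sum_\lambda H_{\lambda,\lambda}\langle T_\lambda K,\psi\rangle$, and the rapid decay of $\langle T_\lambda K,\psi\rangle$ (both $K$ and $\psi$ are Schwartz) dominates the polynomial growth of $H_{\lambda,\lambda}$.

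I expect the main obstacle to be the rigorous bookkeeping of this last paragraph at the level of tempered distributions: that the Fourier series converges in $\sch'$, that multiplication by the compactly supported smooth multiplier $\hat K$ commutes with the sum and annihilates every non-central periodization term, and that the identity may be transported back by $\mathcal{F}^{-1}$. All of these rest on the continuity of the relevant operations on $\sch'$ together with the strict inclusion $Q_\vareps\subsetneq Q$, which is precisely the role of the margin $\vareps>0$ in the hypothesis; it is this oversampling that lets a single Schwartz kernel $K$ simultaneously perform the sampling interpolation and the deconvolution.
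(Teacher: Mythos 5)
Your proposal is correct and, at the level of architecture, coincides with the paper's proof: both rest on (i) the identity $H_{\lambda,\lambda}=(\sigma\ast R(g,g)^\ast)(\lambda)$, which survives in the distributional setting because $g\in\sch(\Rd)$ makes $R(g,g)^\ast$ a Schwartz function, (ii) the observation that $K=\mathcal{F}^{-1}\bigl(\varphi/\overline{\mathcal{U}V_gg}\bigr)$ lies in $\sch(\Rtd)$ directly, with no Wiener--L\'evy argument, and (iii) a distributional sampling expansion of the bandlimited distribution $F=\sigma\ast R(g,g)^\ast$ followed by deconvolution. The one genuine difference is how you execute step (iii): the paper cites Campbell's distributional sampling theorem \cite{ca68} to expand $F$ in translates of $\mathcal{F}^{-1}\varphi$ and only afterwards convolves that expansion with $K$, whereas you prove the sampling statement yourself, by Fourier-transforming $F\cdot\sum_{\lambda\in\Lambda}\delta_\lambda$ via Poisson summation and letting the multiplier $\hat{K}$, supported in $Q$, annihilate every translate $T_\nu\hat{F}$, $\nu\in\tfrac1a\ZZ^d\times\tfrac1b\ZZ^d$, $\nu\neq0$, thanks to the margin $\vareps>0$. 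Your fused argument even has a small advantage: it produces exactly the kernel $\mathcal{F}^{-1}\bigl(\varphi/\overline{\mathcal{U}V_gg}\bigr)$ of the statement, whereas the paper's two-step composition literally yields $\mathcal{F}^{-1}\varphi\ast K=\mathcal{F}^{-1}\bigl(\varphi^{2}/\overline{\mathcal{U}V_gg}\bigr)$ (also a valid reconstruction kernel, since $\varphi\equiv1$ on $\supp\hat{\sigma}$, but not verbatim the one claimed).

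One step you should not have waved through is the constant. Carried out honestly, your own computation gives $\mathcal{F}\bigl(F\cdot\sum_{\lambda\in\Lambda}\delta_\lambda\bigr)=(ab)^{-d}\sum_\nu T_\nu\hat{F}$, hence after multiplying by $\hat{K}$ and collapsing the periodization one gets $(ab)^{-d}\hat{\sigma}$, i.e.
\begin{equation}
  \sigma=(ab)^{d}\sum_{\lambda\in\Lambda}H_{\lambda,\lambda}\,T_\lambda K, \nonumber
\end{equation}
the \emph{reciprocal} of the prefactor in \eqref{equ:reconstructdistribution}. You justify the stated constant by ``matching the PSF normalization exactly as in the proof of Theorem~\ref{thm:reconstructLp}'', but that proof contains the same inconsistency: equation \eqref{equ:sigmarggsinc} carries a factor $(ab)^{-d}$ that is absent from the cardinal series the paper itself states just before Theorem~\ref{thm:reconstructLp}. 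So the discrepancy is inherited from the paper rather than created by you; still, it sits precisely at the step you left implicit, and deriving the normalization explicitly (the correct factor is the cell volume $(ab)^d$, consistent with the Riemann-sum heuristic $(ab)^d\sum_\lambda F(\lambda)T_\lambda K\approx F\ast K=\sigma$) would have exposed it.
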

\begin{proof}
  If $g \in \sch (\Rd)$, then $R(g,g) \in \sch (\Rtd)$ and thus $\sigma \ast R(g,g)^\ast \in \sch' (\Rtd)$ with 
  $\supp \widehat{\sigma \ast R(g,g)^\ast} \subseteq Q_\vareps$. The distributional version of the sampling theorem \cite{ca68} now yields that
  \begin{equation}
    \sigma \ast R(g,g)^\ast 
      = \frac{1}{(ab)^d} \sum_{\lambda \in \Lambda} (\sigma \ast R(g,g)^\ast)(\lambda) T_{\lambda} \mathcal{F}^{-1} \varphi. \nonumber
  \end{equation}
  with distributional convergence. Since $\sigma \ast R(g,g)^\ast$ is
  an entire function of at most polynomial growth on $\Rtd$ (by the 
  theorem of Paley-Wiener~\cite{rudin73}) the pointwise evaluations are well-defined. Likewise, since $\sigma^{KN}$ is continuous from $\sch (\Rd)$
  to $\sch' (\Rd)$, the mapping $\lambda \mapsto \langle \sigma^{KN} \pi(\lambda) g, \pi(\lambda) g \rangle$ is continuous, therefore,
  as in Lemma \ref{lem:diagelem}, $H_{\lambda,\lambda} = (\sigma \ast R(g,g)^\ast)(\lambda)$ for $\lambda \in \Lambda$.
  
  To conclude, we observe that $\varphi \cdot \overline{\mathcal{U}
    V_g g}^{-1}$ is in $\sch (\Rtd)$ and thus also 
  $K = \mathcal{F}^{-1} \left( \varphi \cdot \overline{\mathcal{U} V_g
      g}^{-1} \right) \in \sch (\Rtd)$. Consequently, the deconvolution
  formulas (\ref{equ:deconvolvesigma}) and (\ref{equ:deconvolvesigma2}) make sense in $\sch' (\Rtd)$, and the reconstruction formula is proved.
\end{proof}

Proposition~\ref{prop:recsymbdist} will also be relevant for the numerical
implementation of the reconstruction formula. By assuming a slightly
smaller spectrum $Q_\epsilon $, the expanding kernel $\mathcal{F}^{-1}
\left( \varphi \cdot \overline{\mathcal{U} V_g       g}^{-1}
\right)$ is in $\mathcal{S} (\Rtd ) $ and decays rapidly. If $\sigma \in
\LpRtd $ instead of $\mathcal{S}'(\Rd )$, then the expansion is
localized and converges rapidly.

\section{Uniqueness Results}
\label{sec:uniqueness}

The reconstruction results of the previous section imply that a \psdo\
with a bandlimited symbol  is uniquely determined by the diagonal of
the channel matrix. In this section, we prove further  uniqueness
results that illustrate the relation between a \psdo\ and the
corresponding channel matrix under various assumptions on the Gabor
system and the symbol. For notational simplicity, we  now work in dimension $d=1$.

Let $\Lambda = a\ZZ \times b\ZZ$ be a lattice, let $g \in \LtR$ and $\gabsys$ the corresponding 
Gabor system in $\LtR$. We  assume that
\begin{equation} \label{equ:matrixzero}
  H_{\lambda \mu} = \langle \sigma^{KN} \pi(\mu)g, \pi(\lambda)g \rangle = 
  0, \quad \forall \lambda, \mu \in \Lambda.
\end{equation}
Of course, if $\gabsys $ spans $\LtRd $, then obviously $\sigma ^{KN}
= 0$. Under the basic assumptions of wireless communications
(bandlimited symbol and Gabor Riesz sequence) the conclusion is not so
obvious. 

Before we state the next theorem, we rewrite the general entries of
the channel matrix. 
We write $\lambda = (\lambda_1,\lambda_2), \mu = (\mu_1,\mu_2) \in \RR^2$.	
From the definition of $\sigma^{KN}$, see (\ref{equ:kn2}), we have
\begin{equation}
  \langle \sigma^{KN} \pi(\mu)g, \pi(\lambda)g \rangle = 
  \langle \hat{\sigma}, \mathcal{U} V_{\pi(\mu)g}  \pi(\lambda)g \rangle. \nonumber
\end{equation}
Using the covariance property of the STFT (e.g.,  \cite[Ch.~3]{gr01}), we compute
\begin{eqnarray}
  V_{\pi(\mu)g}  \pi(\lambda)g (-x,\xi)
  &=& e^{-2 \pi i (\xi - \lambda_2) \lambda_1} e^{2 \pi i \mu_2 (-x - \lambda_1)} 
  T_{\lambda-\mu} V_g g (-x,\xi) \nonumber\\
  &=& e^{2\pi i \lambda_2 \lambda_1} e^{-2\pi i \mu_2 \lambda_1} M_{(\mu_2,-\lambda_1)} 
  T_{\lambda - \mu} V_g g (-x,\xi)\, . \nonumber
\end{eqnarray}
Writing  $G = \mathcal{U} V_g g$, we  conclude that (\ref{equ:matrixzero}) is equivalent to
\begin{equation} \label{equ:matrixzero2}
  \langle \hat{\sigma}, M_{(\mu_2,-\lambda_1)} T_{\lambda - \mu} G \rangle = 
  0, \quad \forall \lambda,\mu \in \Lambda,
\end{equation}
 Here $\mu_2 \in b\ZZ$ and $\lambda _1 \in a\ZZ$, so the modulations
are taken with respect to the lattice $\Lambda' = b\ZZ \times a \ZZ$. 
 Combining (\ref{equ:matrixzero}) and (\ref{equ:matrixzero2}), we obtain
\begin{equation} \label{equ:matrixzero3}
  \langle \hat{\sigma}, M_{\mu} T_{\lambda} G \rangle = 
  0, \quad \forall \lambda \in \Lambda, \mu \in \Lambda' = b\ZZ \times a\ZZ.
\end{equation}

The first uniqueness theorem treats the case of Gaussian Gabor
systems. 

\begin{theorem} \label{thm:matrixzerogauss}
  Let $\sigma \in \sch'(\RR^2)$, $\varphi(x) = e^{- \pi x^2}$ be the Gaussian function and 
  $\Lambda = a\ZZ \times b\ZZ$ an arbitrary lattice. If $\hat{\sigma}$ is compactly supported and
  \begin{equation} \label{equ:thmmatrixzero}
    H_{\lambda \mu} = \langle \sigma^{KN} \pi(\mu)\varphi, \pi(\lambda)\varphi \rangle = 
    0, \quad \forall \lambda, \mu \in \Lambda,
  \end{equation}
then $\sigma^{KN}$ is identically zero.
\end{theorem}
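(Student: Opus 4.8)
The plan is to exploit the reformulation \eqref{equ:matrixzero3} together with the fact that, for the Gaussian window, $G = \mathcal{U}V_\varphi\varphi$ is a nowhere-vanishing generalized Gaussian. A direct computation gives $V_\varphi\varphi(x,\xi) = 2^{-1/2}e^{-\pi i x\xi}e^{-\frac{\pi}{2}(x^2+\xi^2)}$, so that $G(s) = c_0\,e^{-\pi s^{T}As}$ for some symmetric $A$ with positive definite real part (concretely $\mathrm{Re}\,A = \tfrac12 I$). Since $\hat\sigma$ is compactly supported, $\sigma$ is a smooth bandlimited function; the goal is to show $\hat\sigma = 0$, which forces $\sigma^{KN}=0$. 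Throughout I argue as if $\hat\sigma$ were a function, deferring the distributional bookkeeping to the end.

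First I fix $\lambda\in\Lambda$ and let $\mu$ range over $\Lambda' = b\ZZ\times a\ZZ$. Writing $h_\lambda = \hat\sigma\cdot\overline{T_\lambda G}$, the hypothesis \eqref{equ:matrixzero3} reads $\widehat{h_\lambda}(\mu)=0$ for all $\mu\in\Lambda'$, which by the Poisson summation formula is equivalent to the vanishing of the periodization of $h_\lambda$ over the dual lattice $\Gamma = \frac1b\ZZ\times\frac1a\ZZ$:
\[
\sum_{\gamma\in\Gamma}\hat\sigma(t-\gamma)\,\overline{G(t-\gamma-\lambda)} = 0 \qquad\text{for a.e. } t .
\]
The decisive point is that, because $\mathrm{supp}\,\hat\sigma$ is compact, for each fixed $t$ only finitely many $\gamma\in\Gamma$ contribute, so this is a \emph{finite} sum.

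Next I let $\lambda$ vary over $\Lambda = a\ZZ\times b\ZZ$. Expanding the Gaussian, $\overline{G(t-\gamma-\lambda)} = \overline{c_0}\,e^{-\pi\lambda^{T}\overline{A}\lambda}\,e^{-\pi (t-\gamma)^{T}\overline{A}(t-\gamma)}\,e^{2\pi\lambda^{T}\overline{A}(t-\gamma)}$, and after dividing by the nonzero $\lambda$-factor $e^{-\pi\lambda^{T}\overline{A}\lambda}$ the displayed identity becomes an exponential sum in $\lambda$, \[ \sum_{\gamma} c_\gamma(t)\,e^{2\pi\lambda^{T}\overline{A}(t-\gamma)} = 0 \qquad \forall\,\lambda\in\Lambda, \] with coefficients $c_\gamma(t) = \hat\sigma(t-\gamma)e^{-\pi(t-\gamma)^{T}\overline A(t-\gamma)}$. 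Restricted to $\Lambda\cong\ZZ^2$ the functions $\lambda\mapsto e^{2\pi\lambda^{T}\overline A(t-\gamma)}$ are characters, and I claim they are \emph{distinct}: two of them agree on $\Lambda$ only if $2\pi\overline A(\gamma'-\gamma)$ is purely imaginary, whereas $\mathrm{Re}\,\big(2\pi\overline A(\gamma'-\gamma)\big) = \pi(\gamma'-\gamma)\neq0$ for $\gamma\neq\gamma'$. Since distinct characters of $\ZZ^2$ are linearly independent, all $c_\gamma(t)$ vanish; taking $\gamma=0$ gives $\hat\sigma(t)=0$ for a.e. $t$, hence $\hat\sigma=0$ and $\sigma^{KN}=0$.

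I expect the main obstacle to be twofold. The essential analytic input is the distinctness/linear-independence step: it is here that one must use the genuine \emph{decay} of the Gaussian, i.e. that $\mathrm{Re}\,A$ is positive definite, rather than merely the chirp; this is precisely what makes the conclusion hold for an \emph{arbitrary} lattice, in sharp contrast to completeness-based arguments, which break down once $\{M_\mu T_\lambda G : \lambda\in\Lambda,\mu\in\Lambda'\}$ fails to be complete (large $a,b$). The second, more technical, obstacle is to justify the pointwise exponential-sum argument when $\hat\sigma$ is only a compactly supported distribution: here I would replace pointwise evaluation by a partition of unity subordinate to the $\Gamma$-translates of a fundamental domain and invoke the Paley--Wiener--Schwartz theorem to separate the Fourier--Laplace transforms of the pieces supported in distinct cells, thereby reducing again to the same character-independence mechanism.
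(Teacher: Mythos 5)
Your proposal is correct and takes essentially the same route as the paper's proof: both pass through the reformulation \eqref{equ:matrixzero3}, apply the Poisson summation formula over the adjoint lattice $\tfrac{1}{b}\ZZ \times \tfrac{1}{a}\ZZ$, use the compact support of $\hat{\sigma}$ to reduce the periodization to a finite exponential sum in $\lambda$, and conclude by linear independence of the resulting distinct exponentials, whose distinctness in both arguments rests on the genuine decay $\mathrm{Re}\,A = \tfrac12 I$ of the Gaussian. The only differences are in execution: the paper proves the independence step concretely via invertible Vandermonde matrices (splitting even and odd $k$) and carries out the distributional bookkeeping by restricting the periodization to open rectangles and invoking H\"ormander's theorem, whereas you invoke abstract Dedekind--Artin independence of characters of $\ZZ^2$ and defer the distributional care to a sketched localization, which can be completed exactly as in the paper (restrict to a fundamental domain, divide out the common nonvanishing smooth factor, and pair with test functions).
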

\begin{proof}
The STFT of the Gaussian function is given by \cite[Lemma 1.5.2]{gr01}
\begin{equation} \label{equ:stftgauss}
  G(\xi,x) = \mathcal{U} V_\varphi \varphi (\xi,x) = V_{\varphi} \varphi (-x,\xi) = 
  \frac{1}{\sqrt{2}} \;e^{-\frac{\pi}{2} \xi^2- \frac{\pi}{2} x^2 + \pi i \xi x}. \nonumber
\end{equation}
Setting $\mu = (\mu_1,\mu_2) \in \Lambda'$, $\lambda = (\lambda_1, \lambda_2) \in \Lambda$, $z = (\xi,x) \in \RR^2$, 
we obtain that
\begin{eqnarray}
  M_{\mu} T_{\lambda} G 
  &=& e^{2 \pi i z \cdot \mu} G(z - \lambda) \nonumber \\
  &=& \frac{1}{\sqrt{2}}\; e^{2 \pi i (\xi \mu_1+ x \mu_2) + \pi i  (\xi-\lambda_1)(x-\lambda_2) 
  - \frac{\pi}{2} (\xi-\lambda_1)^2 - \frac{\pi}{2} (x - \lambda_2)^2}. \label{equ:exponent}
\end{eqnarray}
The exponent in (\ref{equ:exponent}) can be written as 
\begin{eqnarray}
  &&2 \pi i (\xi \mu_1 + x \mu_2) + \pi i  (\xi-\lambda_1)(x-\lambda_2) - \frac{\pi}{2} (\xi-\lambda_1)^2 - \frac{\pi}{2} (x - \lambda_2)^2 \\
  &=& \pi i \lambda_1 \lambda_2 - \frac{\pi}{2} \lambda_1^2 - \frac{\pi}{2} \lambda_2^2 - \frac{\pi}{2} \xi^2 - \frac{\pi}{2} x^2 + \pi i \xi x - \nonumber \\
  &-& \pi i \xi \lambda_2 - \pi i x \lambda_1 + \pi \xi \lambda_1 + \pi x \lambda_2 + 2 \pi i (\xi \mu_1 + x \mu_2). \nonumber
\end{eqnarray}	   
Next we define
\begin{equation} \label{equ:Flambdadef}
  F_\lambda (\xi,x) = e^{- \pi i \xi \lambda_2 - \pi i x \lambda_1 + \pi \xi \lambda_1 + \pi x \lambda_2 }, \quad \lambda \in \Lambda
  \nonumber
\end{equation}
and
\begin{equation} \label{equ:rhodef}
  \rho = \hat{\sigma} \cdot \overline{G}.
\end{equation}
Since $G$ is in $\sch (\RR^2)$, $\rho$ is a tempered distribution with
compact support in a set $S \subseteq \RR^2$. Furthermore, since
$F_\lambda $ is infinitely differentiable on any bounded open set, $F_\lambda \rho $ is
again in $\sch ' (\RR^2)$ with compact support in $S$.   With this notation and  
$k_\lambda = e^{\pi i \lambda_1 \lambda_2 - \frac{\pi}{2} \lambda_1^2
  - \frac{\pi}{2} \lambda_2^2 }\neq 0$, the assumption (\ref{equ:thmmatrixzero}) can be recast as
\begin{eqnarray}
  \langle \hat{\sigma}, M_{\mu} T_{\lambda} G \rangle
  &=& k_\lambda  \langle \hat{\sigma}, M_{\mu} G F_\lambda \rangle 
  = k_\lambda  \langle \rho, M_{\mu} F_\lambda \rangle \nonumber \\
  &=& k_\lambda \left( \rho \cdot F_\lambda \right) \fthat (\mu) 
  = 0 \label{equ:ftrho}
\end{eqnarray}
for all $\lambda \in \Lambda$ and for all $\mu \in \Lambda'$.
  
In the  remainder of the proof we will apply  the Poisson summation formula
for compactly supported distributions \cite[Corollary 8.5.1]{fr98} for
each $\lambda \in \Lambda $.  
By  restricting the periodization to a specific open set, we will derive  a system of equations for the 
restrictions of $\rho$.  Then we will use the invertibility of a
Vandermonde matrix for some selected $\lambda$'s to conclude  
that all restrictions of $\rho$ vanish. 
 
By the Poisson summation formula for compactly supported distributions, (\ref{equ:ftrho}) is equivalent to
\begin{equation}
  \mathscr{P}_\lambda := \sum_{\nu \in \Lambda^\circ} T_\nu (\rho F_\lambda) = 0, \quad \forall \lambda\in \Lambda, \nonumber
\end{equation}
where $\Lambda^\circ = \frac{1}{b} \ZZ \times \frac{1}{a} \ZZ$ is the
adjoint lattice of $\Lambda$ and thus the dual lattice of $\Lambda
'$. 
Since $S$ is compact, there is a positive integer $L$ such that 
\begin{equation}
  S=\text{supp } \rho \subset R = \left(\frac{-L+1}{b},\frac{L-1}{b} \right) \times \left(\frac{-L+1}{a},\frac{L-1}{a} \right). \nonumber
\end{equation}
We define the \emph{open} rectangle $Q = \left(-\frac{1}{b},\frac{1}{b} \right) \times \left(-\frac{1}{a},\frac{1}{a}\right)$ and 
$Q_{j,k} = (\frac{j}{b},\frac{k}{a}) + Q $, for $-L \leq j,k \leq L$. 
Clearly  $Q$ contains a period of $\mathscr{P}_\lambda $ and 
\begin{equation} \label{equ:Rcovered}
  R \subset \left(\frac{-L}{b},\frac{L}{b} \right) \times \left(\frac{-L}{a},\frac{L}{a} \right) =  \bigcup_{-L+1 \leq j,k \leq L-1} Q_{j,k}. 
\end{equation}
Next we consider the restriction of the periodization $\mathscr{P}_\lambda$ to $Q$,
\begin{equation} \label{equ:periodizationrest}
  \mathscr{P}_\lambda \vert_Q = \sum_{\nu \in \Lambda^\circ} T_\nu (\rho F_\lambda) \vert_Q = 0. 
\end{equation}

Since $R$ contains $\text{supp } \rho$, (\ref{equ:periodizationrest}) reduces to
\begin{equation}  \label{equ:periodizationrest2}
  \mathscr{P}_\lambda \vert_Q 
  = \sum_{j = -L}^L \sum_{k = -L}^L T_{(\frac{j}{b},\frac{k}{a})} (\rho F_\lambda) \vert_Q 
  = \sum_{j = -L}^L \sum_{k = -L}^L T_{(\frac{j}{b},\frac{k}{a})} \rho
  \vert_Q \cdot T_{(\frac{j}{b},\frac{k}{a})}  F_\lambda \vert_Q
  = 0
\end{equation}
for all $\lambda \in \Lambda$.
In (\ref{equ:periodizationrest2}) the  sum is from $-L$ to $L$,
because this covers all shifts of $Q$ which  
 intersect  $\text{supp } \rho$ according to (\ref{equ:Rcovered}). 
These identities (as well as the subsequent derivations) are to be
understood  in the weak sense. For instance,
\eqref{equ:periodizationrest2} means that  for all $\varphi \in \sch (\RR^2 )$
with $\mathrm{supp}\, \varphi \subseteq Q$  
$$
  \langle \mathscr{P}_\lambda \vert_Q, \varphi \rangle = 
   \sum_{j = -L}^L \sum_{k = -L}^L \langle T_{(\frac{j}{b},\frac{k}{a})} \rho
   \cdot T_{(\frac{j}{b},\frac{k}{a})}  F_\lambda , \varphi \rangle 
  = 0 \, .
$$
Next we define 
\begin{equation} \label{equ:defrhojk}
  \rho_{j,k} :=  T_{(\frac{j}{b},\frac{k}{a})} \rho \vert_Q \, . \nonumber
\end{equation}
Since 
\begin{equation} \label{equ:Flambdashifted}
  F_\lambda (\xi - j/b,x - k/a) = F_\lambda (\xi ,x) F_\lambda (-j/b,
  -k/a)   \,  
\end{equation}
and $F_\lambda (\xi , x) \neq 0$ for all $x,\xi \in \RR $, we may  divide (\ref{equ:periodizationrest2}) 
by the exponentials in $x$ and $\xi$ and obtain 
\begin{eqnarray} 
  &&  \sum_{j=-L}^L \sum_{k=-L}^L \rho_{j,k} \; F_\lambda (-j/b,-k/a)  \nonumber \\
  &=& \sum_{j=-L}^L \sum_{k=-L}^L \rho_{j,k} \; e^{l_1 (\pi i k - \pi j \frac{a}{b})} e^{l_2 (\pi i j - \pi k \frac{b}{a})} = 0
  	\nonumber
\end{eqnarray}
for all $\lambda = (al_1,bl_2)  \in \Lambda$. By splitting the sum over $k$ into an even and an odd part, we obtain
\begin{eqnarray}
  &&\sum_{j=-L}^L \sum_{k \text{ even}} \rho_{j,k} \; \left(e^{- \pi j \frac{a}{b}}\right)^{l_1} e^{l_2 (\pi i j - \pi k \frac{b}{a})} +   \nonumber \\
  &+& \sum_{j=-L}^L \sum_{k \text{ odd}} \rho_{j,k} \; \left(-e^{- \pi j \frac{a}{b}}\right)^{l_1} e^{l_2 (\pi i j - \pi k \frac{b}{a})} = 0.	\nonumber
\end{eqnarray}
This identity holds for all $l_1,l_2 \in \ZZ$.    We note that the
$(4L+2) \times (4L+2)$-matrix $V$  with entries 
$V_{l_1 r} = z_r^{l_1}, r = 1, \dots, 4L+2, l_1 = 0,1,\dots,4L+1$,
where $z_r = e^{\pi i (r-L-1) a/b}$ for $r = 1,\dots,2L+1$ and
$z_r = -e^{\pi i (r-3L-2) a/b}$ for $r = 2L+2,\dots,4L+2$
is a Vandermonde matrix based on the
$4L+2$ distinct nodes $\pm e^{\pi j a/b}$. Likewise the
$(2L+1)\times (2L+1)$-matrix  $W$ with entries 
  $W_{l_2 k} = e^{-l_2 \pi   k b/a}, k=-L, \dots , L, l_2  =
  0,\dots,2L$ is an invertible Vandermonde matrix. 

We conclude that, for every $l_2$ and for every $j = -L, \dots, L$,
\begin{equation} \label{equ:keven}
  \sum_{k \text{ even}} \rho_{j,k} \; e^{l_2 (\pi i j - \pi k \frac{b}{a})} = 0
\end{equation}
and
\begin{equation} \label{equ:kodd}
  \sum_{k \text{ odd}} \rho_{j,k} \; e^{l_2 (\pi i j - \pi k \frac{b}{a})} = 0.
\end{equation}
We divide the equations  (\ref{equ:keven}) and (\ref{equ:kodd}) by $e^{l_2 \pi i j}$ and add them to obtain
\begin{equation} \label{equ:vdmk}
  \sum_{k= -L}^L \rho_{j,k} \; \left( e^{- \pi k \frac{b}{a}}
  \right)^{l_2} =  0 
\end{equation}
for every $l_2$ and for every $j = -L, \dots, L$.
Since the coefficient matrix $W$ is invertible, 
we conclude that $\rho_{j,k} = 0$ for $-L \leq j,k \leq L$. 

Using \cite[Theorem 2.2.1]{ho03}, 
we conclude that $\rho = 0$. Since $\rho = \hat{\sigma } G $  by  \eqref{equ:rhodef} and $G$
is a Gaussian,   we arrive at  $\hat{\sigma} = 0$, as was to be shown.  

\end{proof}

{\bf Idea of an alternative proof of Theorem
    \ref{thm:matrixzerogauss}.}

We sketch an alternative proof where the argument is based on the fact that the
Gaussian $g$ is a (strictly) totally positive function. 
This means that for  two arbitrary  sequences of real numbers $x_1 <
x_2 < \dots < x_n$ and $y_1< y_2 < \dots < y_n$ the matrix
$$
\Big( g(x_j - y_k), j,k = 1, \dots , n\Big)
$$ 
has a strictly positive determinant and is thus
invertible. See~\cite{Schoenb1951a,SchoenbWhit1953a} for the  fundamental properties of totally
positive functions. 

It is profitable to use the Weyl calculus of \psdo s, which is
formulated by means of the (cross-) Wigner
distribution of $f,g\in \LtR$
$$ 
W(f,g)(x,\xi) =   \int_{\bR } f(x + \tfrac{t}{2})\overline{g(x -
  \tfrac{t}{2})}e^{-2\pi i \xi \cdot t}\,dt\, .$$
The Wigner distribution satisfies the following covariance property
(~\cite{folland89} or~\cite[Prop.~4.3.2c]{gr01}):
$$
W(\pi (\mu )\pi  (\lambda
  )f, \pi (\lambda )g) = e^{-\pi  \mu _1\mu _2} M_{\tilde \mu }
  T_\lambda W(f,g) \, ,
$$
where $\tilde \mu = (\mu _2, -\mu _1)$ for $\mu = (\mu _1,\mu _2) \in
\bR ^2$.  The Wigner distribution of the Gaussian $\varphi (t) = e^{-\pi t^2}$
is the Gaussian
$$
W(\varphi , \varphi )(x,\xi )  = \sqrt{2} e^{-\pi ( x^2 + \xi ^2)/2}
$$
with Fourier transform $ \Phi (\xi , x) = \widehat{W(\varphi, \varphi) }(\xi
,x) =  2^{-\frac{3}{2}} e^{- 2\pi (\xi ^2 + x^2)}$. 
Now assume that $\hat \sigma \in \LiRt$ and set  $\hat{\tau
}(\xi ,x) = e^{\pi i x\xi} \hat{\sigma }(\xi,x)$, then we have 
\begin{equation}
  \label{eq:c9}
  \langle \sigma ^{KN} \pi (\lambda )g, \pi (\mu )\pi  (\lambda
  )g\rangle = \langle \tau , W(\pi (\mu )\pi  (\lambda
  )g, \pi (\lambda )g\rangle  
\end{equation}
(See \cite{folland89} or \cite[Chpt.~14.3]{gr01} for the transition
between the Kohn-Nirenberg calculus and the Weyl calculus.)
Consequently, $\langle \sigma ^{KN} \pi (\nu )\varphi, \pi  (\lambda
  )\varphi \rangle = 0$ for all $\lambda , \nu \in \Lambda $ holds, \fif\ 
  \begin{align}
\langle \tau , W(\pi (\mu )\pi  (\lambda
  )\varphi, \pi (\lambda )\varphi\rangle  &=    e^{\pi  \mu _1\mu _2} \langle \tau
  , M_{\tilde \mu } T_\lambda W(\varphi , \varphi )\rangle \notag \\
& = e^{\pi  \mu _1\mu _2}
    e^{-2\pi i \tilde \mu \cdot \lambda }\, \langle \hat{\tau },
    M_{-\lambda } T_{\tilde \mu } \Phi \rangle  \notag \\
& = e^{\pi  \mu _1\mu _2}
    e^{-2\pi i \tilde \mu \cdot \lambda }\, \Big( \hat \tau \cdot
    T_{\tilde \mu } \Phi \Big) \, \widehat{}\,  (\lambda )  =  0     \label{eq:c10}
  \end{align}
for all $\lambda , \mu \in \Lambda $, or equivalently for all $\lambda
\in \Lambda $ and $\tilde \mu \in  \Lambda '$. 
By the Poisson summation formula \eqref{eq:c10} is equivalent to 
$$
\sum _{\nu \in \Lambda ^\circ } T_\nu (\hat \tau \,T_{\tilde \mu } \Phi
)(\xi, x) = \sum _{j,k=-L} ^L \hat \tau (\xi +
\frac{j}{a},x+\frac{k}{b}) e^{-\pi (\xi +
\frac{j}{a}- bl_2 )^2/2 + (x+\frac{k}{b} +a  l_1)^2/2} =0 
$$ 
for almost all $x,\xi \in [0,1/a]\times [0,1/b] $ and for all $\tilde
\mu  = (bl_2, -al_1) \in \Lambda ' $.   
Since Gaussian functions are totally positive, the matrix with entries  
$ e^{-\pi (\xi +
\frac{j}{a}- bl_2 )^2/2}$, $|j|\leq L, l_1 = 0, \dots, 2L $ is
invertible, and likewise the matrix $e^{-\pi  (x+\frac{k}{b} -
  al_1)^2)/2}$, $ |k|\leq L, l_2 = 0, \dots 2L $ is invertible. From this
we conclude that $\tau (\xi +
\frac{j}{a},x+\frac{k}{b})  = 0$   for all $j,k$ and almost  all $x,\xi $. 
Consequently $\hat \tau  = 0$ almost everywhere and thus $\hat \sigma
= 0$. \qed

Finally, we investigate an important assumption made in wireless
communications. To facilitate the inversion of the channel matrix
in~\eqref{eq:c4}, it is commonly assumed that the channel matrix is
a diagonal matrix~\cite{dmbssbook10, boduscsh10, grhahlmasc07, shhwdaka10}.  Clearly, if the Gabor system $\gabsys $  is a (Riesz) basis for
$\LtRd$, then there is a bijection between operators and channel
matrices, and thus there exist diagonal channel matrices.

However, if  the Gabor system is
a frame, then this assumption is never satisfied. 

\begin{theorem} 
	\label{thm:frame}
Let $\sigma^{KN}$ be a bounded operator on $\LtRd$, $\Lambda = \abZd$ be a lattice with $ab < 1$ and $g \in \LtRd$ such that $\gabsys$ is a frame for $\LtRd$.
If
\begin{equation} \label{equ:thmframe}
	\langle \sigma^{KN} \pi(\mu)g, \pi(\lambda)g \rangle = d_\mu \delta_{\lambda \mu}, \quad \forall \lambda, \mu \in \Lambda,
\end{equation}
then $\sigma^{KN}$ is identically zero.
\end{theorem}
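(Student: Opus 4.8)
The plan is to show that the diagonality hypothesis forces $\sigma^{KN}$ to annihilate every frame vector, and then to invoke completeness of the frame. Write $T=\sigma^{KN}$ and let $S$ denote the frame operator of $\gabsys$. Since $\gabsys$ is a frame, every $f\in\LtRd$ admits the reconstruction $f=\sum_{\lambda\in\Lambda}\langle f,\pi(\lambda)g\rangle\,\pi(\lambda)\gamma$ with canonical dual window $\gamma=S^{-1}g$; here one uses that $S$ commutes with the time-frequency shifts $\pi(\mu)$, $\mu\in\Lambda$, so the canonical dual frame is again the Gabor system $\{\pi(\mu)\gamma:\mu\in\Lambda\}$. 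First I would apply this expansion to $f=T\pi(\mu)g$. By the diagonal assumption $\langle T\pi(\mu)g,\pi(\lambda)g\rangle=d_\mu\delta_{\lambda\mu}$, only the term $\lambda=\mu$ survives, and hence
\[
T\pi(\mu)g=d_\mu\,\pi(\mu)\gamma,\qquad \mu\in\Lambda .
\]

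Next I would extract a scalar identity for $d_\mu$ by pairing this relation with $\pi(\mu)g$. Using unitarity of $\pi(\mu)$,
\[
d_\mu=\langle T\pi(\mu)g,\pi(\mu)g\rangle=d_\mu\,\langle\pi(\mu)\gamma,\pi(\mu)g\rangle=d_\mu\,\langle\gamma,g\rangle ,
\]
so that $d_\mu\bigl(1-\langle\gamma,g\rangle\bigr)=0$ for every $\mu$. Thus everything reduces to showing $\langle\gamma,g\rangle\neq1$.

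The crux, and the step I expect to be the main obstacle, is to evaluate $\langle\gamma,g\rangle=\langle S^{-1}g,g\rangle$ and to see precisely where the redundancy hypothesis $ab<1$ enters. Since $S$ is positive and commutes with every $\pi(\mu)$, so does $S^{-1/2}$; therefore $h=S^{-1/2}g$ generates a Gabor system whose frame operator is $S^{-1/2}SS^{-1/2}=\mathrm{Id}$, i.e.\ $\mathcal{G}(h,\Lambda)$ is a Parseval frame. A Parseval Gabor frame has window norm $\|h\|_2^2=(ab)^d$ (the normalization identity for tight Gabor frames, see \cite{gr01}), and since $S^{-1/2}$ is self-adjoint,
\[
\langle\gamma,g\rangle=\langle S^{-1}g,g\rangle=\|S^{-1/2}g\|_2^2=(ab)^d .
\]
Because $ab<1$ we obtain $(ab)^d<1$, hence $\langle\gamma,g\rangle\neq1$. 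This is exactly where $ab<1$ is indispensable: when $ab=1$ the system is a Riesz basis, $\langle\gamma,g\rangle$ may equal $1$, and diagonal channel matrices genuinely exist, consistent with the discussion preceding the theorem.

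Finally I would conclude. From $d_\mu\bigl(1-(ab)^d\bigr)=0$ we get $d_\mu=0$ for all $\mu$, so $T\pi(\mu)g=d_\mu\,\pi(\mu)\gamma=0$ for every $\mu\in\Lambda$. Since $\gabsys$ is a frame, its linear span is dense in $\LtRd$; as $T$ is bounded and vanishes on this dense set, $T=\sigma^{KN}=0$, as claimed.
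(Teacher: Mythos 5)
Your proof is correct, but it takes a genuinely different route from the paper's. The paper argues structurally: from the diagonal hypothesis, $\sigma^{KN}\pi(\mu)g$ is orthogonal to $\mathrm{span}\{\pi(\lambda)g:\lambda\in\Lambda,\ \lambda\neq\mu\}$; by the Duffin--Schaeffer dichotomy (removing one element of a frame leaves either an incomplete set or again a frame), and since incompleteness would force $\gabsys$ to be a Riesz basis and hence $ab=1$ by the density theory of Gabor frames --- contradicting $ab<1$ --- the reduced system is still complete, so $\sigma^{KN}\pi(\mu)g=0$ for every $\mu$ and thus $\sigma^{KN}\equiv 0$. You instead work with the canonical dual window $\gamma=S^{-1}g$: the frame expansion collapses under the diagonal hypothesis to $\sigma^{KN}\pi(\mu)g=d_\mu\,\pi(\mu)\gamma$, and pairing with $\pi(\mu)g$ gives $d_\mu\bigl(1-\langle\gamma,g\rangle\bigr)=0$, where $\langle\gamma,g\rangle=\lVert S^{-1/2}g\rVert_2^2=(ab)^d$ by the normalization of Parseval Gabor frames (equivalently, the Wexler--Raz biorthogonality relation at the origin of the adjoint lattice). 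Each argument leans on a different nontrivial piece of Gabor theory: the paper on the density theorem ($ab=1$ exactly when the frame is a Riesz basis), you on the identity $\langle S^{-1}g,g\rangle=(ab)^d$; both uses of $ab<1$ are legitimate and both conclusions follow from boundedness of $\sigma^{KN}$ plus completeness of the frame. Your version is more quantitative: it identifies the image vectors $\sigma^{KN}\pi(\mu)g$ explicitly, pins down precisely where $ab<1$ enters through the value $(ab)^d$, and makes transparent why diagonal channel matrices genuinely exist at $ab=1$ (there $\langle\gamma,g\rangle=1$ and the constraint $d_\mu(1-\langle\gamma,g\rangle)=0$ is vacuous), whereas the paper's dichotomy argument reaches the same conclusion without computing any inner products.
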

\begin{proof}
It follows from (\ref{equ:thmframe}) that $\sigma^{KN} \pi(\mu)g$ is orthogonal to the linear span of the set 
$\{ \pi(\lambda)g: \lambda \in \Lambda, \lambda \neq \mu \}, \forall \mu \in \Lambda$. 
That is
\begin{equation}
  \sigma^{KN} \pi(\mu)g \perp \text{span}{\{ \pi(\lambda)g: \lambda \in \Lambda, \lambda \neq \mu \}}, \quad \forall \mu \in \Lambda. \nonumber
\end{equation}
Since $\{ \pi(\lambda)g: \lambda \in \Lambda \}$ is a frame for $\LtRd$, according to \cite[Lemma IX]{dusc52}, 
there are two possibilities for the set $\{ \pi(\lambda)g: \lambda \in \Lambda, \lambda \neq \mu \}$. Either
\begin{itemize}
  \item $\{ \pi(\lambda)g: \lambda \in \Lambda, \lambda \neq \mu \}$ is incomplete in $\LtRd$, or
  \item $\{ \pi(\lambda)g: \lambda \in \Lambda, \lambda \neq \mu \}$ is again a frame for $\LtRd$.
\end{itemize}
If the set $\{ \pi(\lambda)g: \lambda \in \Lambda, \lambda \neq \mu \}$ is incomplete in $\LtRd$, then the set $\{ \pi(\lambda)g: 
\lambda \in \Lambda \}$ has to be a Riesz basis for $\LtRd$. Otherwise it would not be possible to 
have an incomplete set after removing one single element. It follows from the density and duality theory of frames 
\cite{chdehe99, gr01, ja95} that in this case $ab = 1$. This contradicts the assumption $ab < 1$.

Therefore the set $\{ \pi(\lambda)g: \lambda \in \Lambda, \lambda \neq \mu \}$ is again a frame for $\LtRd$. This implies that the 
orthogonal complement of  $\{ \pi(\lambda)g: \lambda \in \Lambda, \lambda \neq \mu \}$ has to be equal to the set $\{ 0 \}$. That is,
\begin{equation}
  \sigma^{KN} \pi(\mu)g \perp \text{span}{\{ \pi(\lambda)g: \lambda \in \Lambda, \lambda \neq \mu \}}, \quad \forall \mu \in \Lambda, \nonumber
\end{equation}
is only possible if $\sigma^{KN} \pi(\mu)g = 0 $, $\forall \mu \in \Lambda$ and therefore $\sigma^{KN} \equiv 0 $.
\end{proof}

It seems that some fundamental algorithms of wireless communications in
nonstationary environments are based on the incorrect assumption that
the channel matrix is diagonal. Nevertheless the intuition of the
communication engineers is perfectly correct and can be supported by
rigorous mathematical results. Indeed, if the symbol $\sigma $ is
smooth and the pulse $g$ of the Gabor system 
possesses a minimal amount of \tf\ concentration, then the channel
matrix decays rapidly off its diagonal~\cite{gro06,Gr10}. Hence, from a
numerical point of view, the channel matrix can be approximated well by
a diagonal matrix. This idea was used for improved equalization
methods in \cite{grhahlmasvXX,HGM07}. 

\subsection*{Acknowledgements}

The second author wants to thank Jose Luis Romero for helpful comments.

\bibliographystyle{spmpsci}      

\begin{thebibliography}{10}

\bibitem{boche08}
H.~{B}oche and U.~{M}{\"o}nich.
\newblock {T}here {E}xists {N}o {G}lobally {U}niformly {C}onvergent
  {R}econstruction for the {P}aley-{W}iener {S}pace ${P}{W}_\pi^1$ of
  {B}andlimited {F}unctions {S}ampled at {N}yquist {R}ate.
\newblock {\em {I}{E}{E}{E} {T}ransactions on {S}ignal {P}rocessing},
  56(7):3170--3179, 2008.

\bibitem{boduhl99}
H.~{B}{\"o}lcskei, P.~{D}uhamel, and R.~{H}leiss.
\newblock {D}esign of pulse shaping {O}{F}{D}{M}/{O}{Q}{A}{M} systems for high
  data-rate transmission over wireless channels.
\newblock In {\em {P}roc. of the 1999 {I}{E}{E}{E} {I}nt. {C}onf. on
  {C}ommunications ({I}{C}{C} '99)}, volume~1, pages 559--564, jun 1999.

\bibitem{buspst88}
{P}. {L}.~{B}utzer and {W}.~{S}plettst{\"o}sser and {R}. {L}.~{S}tens.
\newblock {T}he sampling theorem and linear prediction in signal analysis.
\newblock {\em {J}ber.d.{D}t.{M}ath.-{V}erein.}  90:1--70, 1988.

\bibitem{ca68}
L.~L. {C}ampbell.
\newblock {S}ampling theorem for the {F}ourier transform of a distribution with
  bounded support.
\newblock {\em {S}{I}{A}{M} {J}ournal on {A}pplied {M}athematics},
  16(3):626--636, 1968.

\bibitem{chdehe99}
O.~{C}hristensen, B.~{D}eng, and C.~{H}eil.
\newblock {D}ensity of {G}abor frames.
\newblock {\em {A}ppl. {C}omput. {H}armon. {A}nal.}, 7(3):292--304, 1999.

\bibitem{coerpuba02}
S.~Colieri, M.~Ergen, A.~Puri, and A.~Bahai.
\newblock A study of channel estimation in {OFDM} systems.
\newblock In {\em Vehicular Technology Conference, 2002. Proceedings. {VTC}
  2002-Fall. 2002 {IEEE} 56th}, volume~2, pages 894--898, Sept. 2002.

\bibitem{dusc52}
R.~J. {D}uffin and A.~C. {S}chaeffer.
\newblock {A} class of nonharmonic {F}ourier series.
\newblock {\em {T}rans. {A}m. {M}ath. {S}oc.}, 72:341--366, 1952.

\bibitem{dmbssbook10}
G.~{D}urisi, V.~I. {M}orgenshtern, H.~{B}{\"o}lcskei, U.~G. {S}chuster, and
  S.~{S}hamai ({S}hitz).
\newblock {I}nformation theory of underspread {W}{S}{S}{U}{S} channels.
\newblock {E}lsevier {A}cademic {P}ress, 2011.

\bibitem{boduscsh10}
G.~{D}urisi, U.~{S}chuster, H.~{B}{\"o}lcskei, and S.~{S}hamai ({S}hitz).
\newblock {N}oncoherent {C}apacity of {U}nderspread {F}ading {C}hannels.
\newblock {\em {I}{E}{E}{E} {T}ransactions on {I}nformation {T}heory},
  56(1):367--395, 2010.

\bibitem{fema10}
P.~{F}ertl and G.~{M}atz.
\newblock {C}hannel {E}stimation in {W}ireless {O}{F}{D}{M} {S}ystems {W}ith
  {I}rregular {P}ilot {D}istribution.
\newblock {\em {I}{E}{E}{E} {T}rans. {S}ign. {P}roc.}, 58(6):3180--3194, 2010.

\bibitem{folland89}
G.~B. Folland.
\newblock {\em Harmonic Analysis in Phase Space}.
\newblock Princeton Univ. Press, Princeton, NJ, 1989.

\bibitem{fr98}
F.~G. {F}riedlander.
\newblock {\em {I}ntroduction of the {T}heory of {D}istributions. {W}ith
  {A}dditional {M}aterial by {M}. {J}oshi}.
\newblock {C}ambridge {U}niversity {P}ress, {C}ambridge, 1998.

\bibitem{gr01}
K.~{G}r{\"o}chenig.
\newblock {\em {F}oundations of {T}ime-{F}requency {A}nalysis}.
\newblock {A}ppl. {N}umer. {H}armon. {A}nal. {B}irkh{\"a}user {B}oston,
  {B}oston, {M}{A}, 2001.

\bibitem{gro06}
K.~Gr\"ochenig.
\newblock Time-frequency analysis of {S}j\"ostrand's class.
\newblock {\em Revista Mat. Iberoam.}, 22(2):703--724, 2006.

\bibitem{grst07}
K.~{G}r\"ochenig and {T}.~{S}trohmer.
\newblock {P}seudodifferential operators on locally compact abelian groups and {S}j{\"o}strand's symbol class.
\newblock {\em {J}. {R}eine {A}ngew. {M}ath.} 613:121--146, 2007.

\bibitem{Gr10}
K.~Gr\"ochenig.
\newblock Wiener's lemma: Theme and variations. an introduction to spectral
  invariance.
\newblock In B.~Forster and P.~Massopust, editors, {\em Four Short Courses on
  Harmonic Analysis}, Appl. Num. Harm. Anal. Birkh\"auser, Boston, 2010.

\bibitem{grhahlmasvXX}
M.~{H}ampejs, {S}vac {P}., G.~{T}aub{\"o}ck, K.~{G}r{\"o}chenig,
  F.~{H}lawatsch, and G.~{M}atz.
\newblock {S}equential {L}{S}{Q}{R}-based {I}{C}{I} equalization and
  decision-feedback {I}{S}{I} cancellation in pulse-shaped multicarrier
  systems.
\newblock In {\em {P}roc. {I}{E}{E}{E} {S}{P}{A}{W}{C}09}, pages 1 -- 5, 2009.

\bibitem{hlmasc02}
F.~{H}lawatsch, G.~{M}atz, and D.~{S}chafhuber.
\newblock {P}ulse-shaping {O}{F}{D}{M}/{B}{F}{D}{M} systems for time-varying
  channels: {I}{S}{I}/{I}{C}{I} analysis, optimal pulse design, and efficient
  implementation.
\newblock In {\em {P}roc. {I}{E}{E}{E} {P}{I}{M}{R}{C}-02}, pages 1012--1016,
  {L}isbon, {P}ortugal, {S}eptember 2002.



\bibitem{hi85}
{J}. {R}.~{H}iggins.
\newblock {F}ive short stories about the cardinal series.
\newblock {\em {B}ull. {A}m. {M}ath. {S}oc., {N}ew {S}er.} 12:45--89, 1985.

\bibitem{ho03}
L.~{H}{\"o}rmander.
\newblock {\em {T}he {A}nalysis of {L}inear {P}artial {D}ifferential
  {O}perators {I}: {D}istribution {T}heory and {F}ourier {A}nalysis}.
\newblock {S}pringer-{V}erlag, 2003.


\bibitem{HorIII85}
L.~H{\"o}rmander.
\newblock {\em The analysis of linear partial differential operators. {III}},
  volume 274 of {\em Grundlehren der Mathematischen Wissenschaften [Fundamental
  Principles of Mathematical Sciences]}.
\newblock Springer-Verlag, Berlin, 1985.
\newblock Pseudodifferential operators.

\bibitem{ja95}
A.~J. E.~M. {J}anssen.
\newblock {D}uality and biorthogonality for {W}eyl-{H}eisenberg frames.
\newblock {\em {J}. {F}ourier {A}nal. {A}ppl.}, 1(4):403--436, 1995.

\bibitem{je77}
{A}. {J}.~{J}erri.
\newblock {T}he {S}hannon sampling theorem - its various extensions and applications: a tutorial review.
\newblock {\em {P}roc. {I}{E}{E}{E}} 65(11):1565--1596, 1977.

\bibitem{komo98}
W.~{K}ozek and A.~{M}olisch.
\newblock {N}onorthogonal pulseshapes for multicarrier communications in doubly
  dispersive channels.
\newblock {\em {S}elected {A}reas in {C}ommunications, {I}{E}{E}{E} {J}ournal
  on}, 16(8):1579--1589, {O}ct 1998.

\bibitem{kopf06}
W.~{K}ozek and G.~E. {P}fander.
\newblock {I}dentification of operators with bandlimited symbols.
\newblock {\em {S}{I}{A}{M} {J}. {M}ath. {A}nal.}, 37(3):867--888, 2006.

\bibitem{hlma06}
G.~{M}atz and F.~{H}lawatsch.
\newblock {T}ime-varying communication channels: {F}undamentals, recent
  developments, and open problems.
\newblock {\em {P}roc. {E}{U}{S}{I}{P}{C}{O}-06, {F}lorence, {I}taly},
  {S}eptember 2006.

\bibitem{grhahlmasc07}
G.~{M}atz, D.~{S}chafhuber, K.~{G}r{\"o}chenig, M.~{H}artmann, and
  F.~{H}lawatsch.
\newblock {A}nalysis, optimization, and implementation of low-interference
  wireless multicarrier systems.
\newblock {\em {I}{E}{E}{E} {T}rans. {W}ireless {C}omm.}, 6(4):1--11, 2007.

\bibitem{hamasc06}
M.~{H}artmann, G.~{M}atz and D.~{S}chafhuber.
\newblock {W}ireless {M}ulticarrier {C}ommunications via {M}ultipulse {G}abor {R}iesz {B}ases.
\newblock {\em {EURASIP} Journal on Applied Signal Processing}, Volume 2006:1--15, 2006.

\bibitem{ma91}
{R}. {J}.~{M}arks {I}{I}.
\newblock {\em {I}ntroduction to {S}hannon {S}ampling and {I}nterpolation {T}heory.}
\newblock {S}pringer {T}exts in {E}lectrical {E}ngineering, {S}pringer-{V}erlag, 1991.

\bibitem{pa97}
{J}. {R}.~{P}artington.
\newblock {\em {I}nterpolation, {I}dentification, and {S}ampling.}
\newblock {C}larendon {P}ress {O}xford, 1997.


\bibitem{pa11}
{E}.~{P}auwels.
\newblock {P}seudodifferential {O}perators, {W}ireless {C}ommunications and 
{S}ampling {T}heorems.
\newblock {\em Ph.D. thesis}, University of Vienna, 2011.

\bibitem{pf08}
G.~{P}fander and D.~F. {W}alnut.
\newblock {M}easurement of time-varying multiple-input multiple-output
  channels.
\newblock {\em {A}ppl. {C}omput. {H}arm. {A}nal.}, 24(3):393--401, 2008.

\bibitem{pfwa06}
G.~E. {P}fander and D.~F. {W}alnut.
\newblock {M}easurement of time-variant channels.
\newblock {\em {I}{E}{E}{E} {T}rans. {I}nform. {T}heory}, 52(11):4808--4820,
  {N}ovember 2006.

\bibitem{re68}
H.~{R}eiter.
\newblock {\em {C}lassical {H}armonic {A}nalysis and {L}ocally {C}ompact
  {G}roups}.
\newblock {C}larendon {P}ress, {O}xford, 1986.


\bibitem{rudin73}
W.~Rudin.
\newblock {\em Functional analysis}.
\newblock McGraw-Hill Book Co., New York, 1973.
\newblock McGraw-Hill Series in Higher Mathematics.

\bibitem{Schn05} P. Schniter,
\newblock Low-complexity equalization of OFDM in doubly-selective
channels. 
\newblock  {\em IEEE Trans. Signal Process.} 52:1002 -- 1011, 2005.


\bibitem{sc06}
P.~{S}chniter.
\newblock {O}n doubly dispersive channel estimation for pilot-aided
  pulse-shaped multicarrier modulation.
\newblock {\em {P}roc. {C}onf. on {I}nformation {S}ciences and {S}ystems},
  2006.

\bibitem{shhwdaka10}
P.~{S}chniter, S.-J. {H}wang, S.~{D}as, and A.~P. {K}annu.
\newblock {E}qualization of {T}ime-{V}arying {C}hannels.
\newblock In F.~{H}lawatsch and G.~{M}atz, editors, {\em {W}ireless
  {C}ommunications over {R}apidly {T}ime-{V}arying {C}hannels}. {E}lsevier
  {A}cademic {P}ress, 2011.

\bibitem{Schoenb1951a}
I. J. Schoenberg.
\newblock On P\'olya frequency functions, I. The totally positive functions
and their Laplace transforms.
\newblock {\em J. Analyse Math.}, 1: 331 -- 374,  1951.

\bibitem{SchoenbWhit1953a}
I. J. Schoenberg, A. Whitney.
\newblock On P\'olya frequence functions, III. The positivity of
translation determinants with an application to the
interpolation problem by spline curves.
\newblock {\em Trans. Amer. Math. Soc.}, 74: 246 -- 259, 1953.

\bibitem{si92}
W.~{S}ickel.
\newblock {C}haracterization of {B}esov-{T}riebel-{L}izorkin spaces via
  approximation by {W}hittaker's {C}ardinal series and related unconditional
  {S}chauer bases.
\newblock {\em {C}onstr. {A}pprox.}, 8:257--274, 1992.

\bibitem{st01}
{T}.~{S}trohmer.
\newblock {A}pproximation of dual {G}abor frames, window decay, and wireless communications.
\newblock {\em {A}ppl. {C}omput. {H}armon. {A}nal.} 11(2):243--262, 2001.

\bibitem{str06}
T.~Strohmer.
\newblock Pseudodifferential operators and {B}anach algebras in mobile
  communications.
\newblock {\em Appl. Comput. Harmon. Anal.}, 20(2):237--249, 2006.

\bibitem{best03}
T.~{S}trohmer and S.~{B}eaver.
\newblock {O}ptimal {O}{F}{D}{M} system design for time-frequency dispersive
  channels.
\newblock {\em {I}{E}{E}{E} {T}rans. {C}ommun.}, 51(7):1111--1122, 2003.

\bibitem{HGM07}
G.~{T}aub{\"o}ck, M.~{H}ampejs, G.~{M}atz, F.~{H}lawatsch, and
  K.~{G}r{\"o}chenig.
\newblock {L}{S}{Q}{R}-based {I}{C}{I} equalization for multicarrier
  communications in strongly dispersive and highly mobile environments.
\newblock {P}roc. {I}{E}{E}{E} {S}{P}{A}{W}{C}07.

\end{thebibliography}

\end{document}